\documentclass [11pt]{article}
\usepackage{amsthm}
\usepackage{amsmath}
\usepackage{amssymb}
\usepackage{enumerate}
\usepackage{epsfig}
\usepackage{srcltx}
\usepackage{rotating}

\usepackage[usenames,dvipsnames]{color}


\setcounter{page}{1} \setcounter{section}{0}
\setcounter{subsection}{0}


\setlength{\topmargin}{-.2cm} \setlength{\oddsidemargin}{-.0cm}
\setlength{\textheight}{8.6 in} \setlength{\textwidth}{6 in}



\newcounter{contador}
\newtheorem{propo}[contador]{Proposition}
\newtheorem{teo}[contador]{Theorem}
\newtheorem{lem}[contador]{Lemma}

\newtheorem{corol}[contador]{Corollary}


\newcommand{\rec}{\noindent}    



\newcommand{\su}{{\mathbb S}^1} 



\newcommand{\K}{{\mathbb K}}
\newcommand{\R}{{\mathbb R}}
\newcommand{\C}{{\mathbb C}}
\newcommand{\N}{{\mathbb N}}


\newcommand{\V}{{\cal{V}}}
\newcommand{\U}{{\cal{U}}}

\newcommand{\G}{{\cal{G}}}


\title{Lie Symmetries of Birational Maps Preserving\\ Genus $0$ Fibrations.\footnote{{\bf Acknowledgements.}
 The second author is partially supported by
Ministry of Economy and Competitiveness of the Spanish Government
through grant DPI2011-25822 and grant 2014-SGR-859 from AGAUR,
Generalitat de Catalunya.}}

\author{Mireia Llorens$^{(1)}$ and V\'{\i}ctor Ma\~{n}osa $^{(2)}$
  \\*[.1truecm]
\\*[-.2truecm]{\small \textsl{$^{(1)}$ Dept. de Matem\`{a}tiques,}}
\\*[-.2truecm] {\small \textsl{Universitat Aut\`{o}noma de Barcelona,}}
\\*[-.2truecm] {\small \textsl{08193 Bellaterra, Barcelona, Spain}}
\\*[-.2truecm] {\small \texttt{mllorens@mat.uab.cat}}
\\*[-.2truecm] {\small \textsl{$^{(2)}$ Dept. de Matem\`{a}tica Aplicada III,}}
\\*[-.2truecm] {\small \textsl{Control, Dynamics and Applications Group (CoDALab),}}
\\*[-.2truecm] {\small \textsl{Universitat Polit\`{e}cnica de Catalunya}}
\\*[-.2truecm] {\small \textsl{Colom 1, 08222 Terrassa, Spain}}
\\*[-.2truecm] {\small \texttt{victor.manosa@upc.edu}}
}
\begin{document}

\maketitle
\begin{abstract}
We prove that any planar birational integrable map, which preserves
a fibration given by genus $0$ curves has a Lie symmetry and some
associated invariant measures. The obtained results allow to study
in a systematic way the global dynamics of these maps. Using this
approach, the dynamics of several maps is described. In particular
we are able to give, for particular examples, the explicit
expression of the rotation number function, and the set of periods
of the considered maps.


\end{abstract}

\rec {\sl 2010 Mathematics Subject Classification:} 37C25, 37C27,
 37E45,  39A23.

\rec {\sl Keywords:}  Integrable maps; Lie symmetries; periodic
orbits; rational parameterizations.

\section{Introduction}\label{S_intro}

A planar \emph{rational} map $F:\U\to\U$, where $\U\subseteq\K^2 $
is an open set and where $\K$ can either be $\R$ or $\mathbb{C}$, is
called \emph{birational} if it has a rational inverse $F^{-1}$
defined in $\U$. Such a map is \emph{integrable} if there exists a
non-constant function $V:\U\to\K$ such that $ V(F(x,y))=V(x,y),$
which  is called a \emph{first integral} of $F$. If a map $F$
possesses a first integral $V$, then the level sets of $V$ are
invariant under $F$. We say that a map $F$ preserves a fibration of
curves $\{C_h\}$ if each curve  $C_h$ is invariant under the
iterates of $F$.

In this paper, we consider integrable birational maps that have
\emph{rational first integrals}
$$
V(x,y)=\frac{V_1(x,y)}{V_2(x,y)},$$ so that they preserve the
fibration given by the algebraic curves
$$
V_1(x,y)-h\,V_2(x,y)=0,\,\mbox{for } h\in\mathrm{Im}(V).
$$

The dynamics of planar birational maps on with rational first
integrals can be classified in terms of the \emph{genus} of the
curves in the fibration associated to the first integral. In
summary, it is known that any birational map $F$ preserving a
fibration of \emph{nonsingular} curves of generic genus greater or
equal than $2$ is  globally periodic. This is a consequence of
Hurwitz automorphisms theorem and Montgomery periodic homeomorphisms
theorem. If the genus of the preserved fibration is generically $1$,
then either $F$ or $F^2$ are conjugate to a linear action. The
reader is referred to \cite{DF,GM,JRV} and references therein for
more details.

We present a systematic way to study the case where
$\{V=h\}_{h\in\mathrm{Im}(V)}$ is a \emph{genus $0$ fibration}, that
is, when each curve $\{V=h\}$ have genus $0$. As it is shown in the
next section, by using rational parameterizations of the curves of
the invariant fibration one obtains that on each curve any
birational map is conjugate to a M\"obius transformation. Using this
fact, we will prove that any of these maps possesses a Lie symmetry
with an associated invariant measure. The proofs are constructive,
so we are able to give the explicit expression of the symmetry and
the density of the measure for particular examples. These results
permits us to give a global analysis of the dynamics of the maps
under consideration. In particular, this approach allows to obtain
the explicit expression of the rotation number function associated
to the maps defined  on an open set foliated by closed invariant
curves. The explicitness of the rotation number function is a
special feature of these kind of maps, on the contrary of what
happens when the maps preserve genus $1$ fibrations,   and it
facilitates the full characterization of the set of periods of the
maps.

The paper is structured in two sections. In Section \ref{S_Main} we
present the main results of the paper: the ones that ensures the
existence of Lie symmetries and invariant measures, Theorem
\ref{T_Liesym} and Corollary \ref{C_Mesura} respectively, and the
one that characterizes the existence of conjugations of birational
maps with conjugated associated M\"obius maps, Proposition
\ref{P_conjgacions}.  In Section \ref{S_applic}, we apply our
approach to study the global dynamics of some particular maps and
recurrences.

\section{Main results}\label{S_Main}

\subsection{Dynamics via associated M\"obius
transformations}\label{Ss_Moeb}

From now on we assume that $F$ is a birational map with a rational
first integral $V=V_1/V_2$, where  both $F$ and $V$ are defined in
an open set of the domain of definition of the dynamical system, the
\emph{good set} $\mathcal{G}(F)\subset \K^2$. We also assume that
there exists a set $\mathcal{H}\subset \mathrm{Im}(V)\subset \K$
such that the set $\U:=\{(x,y)\in\K^2$: $V(x,y)\in\mathcal{H}\}\cap
\mathcal{G}(F)$ is a non empty open set of $\K^2$, and each curve
$C_h:=\{V_1(x,y)-hV_2(x,y)=0\}$ is irreducible in $\C$ and has genus
$0$.

A characteristics  of genus $0$ curves is that they are rationally
parameterizable. Recall that a \emph{rational} curve $C$ in $\C^2$,
is an algebraic one such that there exists rational functions
$P_1(t),P_2(t)\in\C(t)$ such that for almost all of  the values
$t\in
  \K$ we have $(P_1(t),P_2(t))\in C$; and reciprocally, for almost all point $(x,y)\in C$, there exists $t\in \K$
  such that $(x,y)=(P_1(t),P_2(t))$.
The map $P(t)=(P_1(t),P_2(t))$ is called a \emph{rational
parametrization} of $C$.

The relationship between rational curves and genus $0$ ones is given
by the Cayley-Riemann Theorem which states that an algebraic curve
in $\C^2$ is rational if and only if has genus $0$, see \cite[Thm.
4.63]{SWPD} for instance.

Of course a rational curve has not a  unique rational
parametrization, however one can always obtain a \emph{proper} one,
that is, a parametrization $P$ such that  $P^{-1}$ is also rational.
In other words, one can always find  a birational parametrization of
a genus $0$ curve $C$, which is unique modulus M\"obius
transformations, \cite[Lemma 4.13]{SWPD}.

Observe that given a birational map $F$ preserving an algebraic
curve $C$ defined in $\K^2$, by using homogeneous coordinates, one
can always extend them to a polynomial map $\widetilde{F}$ and an
algebraic curve $\widetilde{C}$ in $\K P^2$. Our main tool, is the
following known result:

\begin{propo}\label{P_mainpropo}
Let $F$ be a birational map defined on an open set $\U\subset\K^2$,
preserving the fibration given by algebraic curves of genus $0$,
$\{\widetilde{C}_h\}_{h\in \mathcal{H}}$, where $\mathcal{H}$ is an
open set of $\K$. Then for each $h\in\mathcal{H}$,
$\widetilde{F}_{|\widetilde{C}_h}$ is conjugate to a M\"obius
transformation in $ \widehat{\K}=\K \cup \{\infty\}$.
\end{propo}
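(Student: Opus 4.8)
The plan is to use the rational parametrization of the curves $\widetilde{C}_h$ to transport the map $\widetilde{F}_{|\widetilde{C}_h}$ to a self-map of $\widehat{\K}$, and then argue that this self-map must be a M\"obius transformation because it is a birational automorphism of the projective line. Fix $h\in\mathcal{H}$. Since $\widetilde{C}_h$ has genus $0$, by the Cayley--Riemann Theorem it is a rational curve, and as recalled above one may choose a \emph{proper} rational parametrization $P_h:\widehat{\K}\to \widetilde{C}_h$, so that $P_h$ is birational onto its image with rational inverse $P_h^{-1}$. Consider the composition
$$
\varphi_h:=P_h^{-1}\circ \widetilde{F}_{|\widetilde{C}_h}\circ P_h:\widehat{\K}\to\widehat{\K}.
$$
Because $\widetilde{F}$ restricts to a birational self-map of $\widetilde{C}_h$ (its inverse is the restriction of $\widetilde{F}^{-1}$, which also preserves the fibration), and $P_h,P_h^{-1}$ are rational, the composition $\varphi_h$ is a rational self-map of $\widehat{\K}$ with a rational inverse, that is, a birational automorphism of $\P^1$.

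The second step is the classical fact that every birational automorphism of $\P^1$ is a M\"obius transformation: a rational map $\widehat{\K}\to\widehat{\K}$ is of degree $1$ precisely when it is invertible as a rational map, and a degree-$1$ rational map is exactly $t\mapsto (at+b)/(ct+d)$ with $ad-bc\neq 0$. Hence $\varphi_h$ is a M\"obius transformation, and by construction $\widetilde{F}_{|\widetilde{C}_h}=P_h\circ\varphi_h\circ P_h^{-1}$ is conjugate (via $P_h$) to the M\"obius map $\varphi_h$. This proves the statement.

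The main point requiring care — and the place where the genus $0$ hypothesis is essential — is ensuring that one can work with a \emph{proper} (birational) parametrization rather than merely a rational surjection; this is what guarantees that $P_h^{-1}$ is rational and hence that $\varphi_h$ inherits birationality. This is precisely the content of \cite[Lemma 4.13]{SWPD} invoked above. A secondary technical issue is that $P_h$ and $\widetilde{F}$ are genuinely well-defined only off finitely many exceptional points (base points of $\widetilde{F}$, and the finitely many parameter values where $P_h$ fails to be bijective); on the projective line this is harmless, since a rational self-map of $\P^1$ defined off a finite set extends uniquely to all of $\widehat{\K}$, so $\varphi_h$ is genuinely a morphism of $\P^1$ and the conjugacy identity holds wherever all the maps are defined. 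One should also remark that the M\"obius map $\varphi_h$ depends on the choice of parametrization $P_h$, but only up to conjugation by a M\"obius transformation (since $P_h$ is unique modulo such), which is why its dynamical invariants — and in particular whether it is elliptic, parabolic or hyperbolic — are well defined; this observation is what Proposition~\ref{P_conjgacions} will later exploit.
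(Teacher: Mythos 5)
Your argument is essentially the same as the paper's: choose a proper rational parametrization $P_h$ of the genus~$0$ curve (Cayley--Riemann plus \cite[Lemma 4.13]{SWPD}), conjugate $\widetilde{F}_{|\widetilde{C}_h}$ by $P_h$ to a birational self-map of $\widehat{\K}$, and observe that such a map is a M\"obius transformation; your extra remarks on why degree-one rational maps are exactly the M\"obius maps and on the finitely many exceptional points are correct elaborations of what the paper leaves implicit. The one point you gloss over is the case $\K=\R$: the Cayley--Riemann theorem a priori yields a parametrization defined over $\C$, and writing $P_h:\widehat{\K}\to\widetilde{C}_h$ for $\K=\R$ tacitly uses the additional (nontrivial) fact that a real rational curve can be properly parametrized \emph{over the reals}, which the paper handles separately by invoking \cite[Thm. 7.6]{SWPD}; you should add that citation or an equivalent argument to cover the real case.
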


\begin{proof}
Set $\K=\C$, since each curve $\widetilde{C}_h$ has a genus $0$, by
the Cayley-Riemmann Theorem there is a rational parametrization of
$\widetilde{C}_h$, $P_h:\widehat{\C}\to \widetilde{C}_h\subset\C
P^2$. This parametrization can be chosen to be proper, so that
$P^{-1}$ is also rational. In consequence, the map
$\widetilde{M}_h=P_h^{-1} \circ \widetilde{F}_h \circ P_h$, defined
in $\widehat{\C}$, is a one-dimensional complex birational map,
hence a M\"obius transformation.

Consider now the case $\K=\R$. Since every rational real curve can
be properly parameterized over the reals \cite[Thm. 7.6]{SWPD}, then
any real birational map preserving a fibration of real algebraic
curves of genus $0$ can be represented by real M\"obius
transformations.
\end{proof}

The dynamics of M\"obius transformations is well understood, see
Proposition \ref{P_dinamica-moeb} for instace. As a consequence of
the above result, a priori, for planar birational integrable maps
preserving a fibration given by genus $0$ curves, there can only
appear curves with periodic orbits of arbitrary large period, curves
filled with dense solutions, as well as curves with one or two
attractive and/or repulsive points, which could be located at the
infinity line of the projective space $\K P^2$, as in the case the
case studied in Proposition \ref{P_exBR-p1}.

\subsection{Existence of Lie symmetries and invariant measures}\label{Ss-Lie-i-mesura}

Some   objects that can be associated to integrable birational maps
preserving a genus $0$ fibration are \emph{Lie symmetries} and
\emph{invariant measures}. Formally, a Lie symmetry of a map $F$
defined in an open set $\U$ is a vector field $X$, also defined in
$\U,$ such that for any $p\in \U$ it is satisfied the compatibility
equation
\begin{equation}\label{E_Lie-sym-char}
X(F(p))=DF(p)\,X(p),
\end{equation}
see \cite{CGM08,HBQC}. From a dynamical viewpoint, if $F$ is an
integrable map with first integral $V$, and therefore preserving the
associated fibration $\{C_h\}$ (see Section \ref{Ss_Moeb}), a Lie
symmetry of $F$ is a vector field $X$ with the same first integral
$V$, such that the map $F$ can be seen as the flow $\varphi$ of this
vector field at certain time $\tau(h)$, which only depends on each
invariant curve $C_h$, that is: $F(p)=\varphi(\tau(h),p)$ for all
$p\in C_h\cap \mathcal{G}(F)$.

We prove that for birational maps preserving genus $0$ fibrations
these vector fields exists, and one of them can be explicitly
constructed by using a family of parameterizations of the invariant
curves.

\begin{teo}\label{T_Liesym}
Any birational map $F$ with a rational first integral $V$,
preserving the genus $0$ fibration given by  $\{V=h\}_{h\in
\mathcal{H}}$, has a Lie symmetry. Furthermore,  if
$\{P_h(t)\}_{h\in \mathcal{H}}$ is a family of proper
parameterizations of $\{V=h\}_{h\in \mathcal{H}}$, then there is a
Lie symmetry of $F$ in $\U$ given by
\begin{equation}\label{E_Liesym}
X(x,y)=\left.D P_h\left(P_h^{-1}(x,y)\right)\cdot
Y_h\left(P_h^{-1}(x,y)\right)\right|_{h=V(x,y)}
\end{equation}
where
\begin{equation}\label{E_LiesymMoeb}
    Y_h(t)=\left(-b(h)+(d(h)-a(h))t+c(h)t^2\right)\,\dfrac{\partial}{\partial t},
\end{equation}
and  the functions $a,b,c$ and $d$ are defined by the coefficients
of the map
$$
M_h(t)=P_h^{-1}\circ F\circ P_h(t)=\frac{a(h)t+b(h)}{c(h)t+d(h)}.
$$
\end{teo}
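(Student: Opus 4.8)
The plan is to reduce the statement, via Proposition \ref{P_mainpropo}, to the one-dimensional situation on each fibre and then transport a canonical Lie symmetry of the Möbius map back to the plane through the parametrizations.

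First I would recall that a vector field $X$ is a Lie symmetry of $F$ if and only if, under any change of coordinates $\Phi$, the pushed-forward field $\Phi_*X$ is a Lie symmetry of $\Phi\circ F\circ\Phi^{-1}$; this is immediate from the chain rule applied to the compatibility equation \eqref{E_Lie-sym-char}. So it suffices to exhibit, on each curve $C_h=\{V=h\}$, a one-dimensional Lie symmetry $Y_h$ of the Möbius map $M_h(t)=P_h^{-1}\circ F\circ P_h(t)=\dfrac{a(h)t+b(h)}{c(h)t+d(h)}$, and then define $X$ fibrewise by $X(x,y)=DP_h(P_h^{-1}(x,y))\cdot Y_h(P_h^{-1}(x,y))$ with $h=V(x,y)$, which is exactly \eqref{E_Liesym}. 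The content to check is then threefold: (i) that $Y_h$ as given in \eqref{E_LiesymMoeb} really is a Lie symmetry of $M_h$; (ii) that the resulting $X$ is a well-defined vector field on $\U$ (independent of ambiguities and with the right regularity); and (iii) that $X$ genuinely satisfies \eqref{E_Lie-sym-char} on the plane, not just fibrewise.

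For step (i): the Möbius group $\mathrm{PGL}(2,\K)$ is a Lie group acting on $\widehat{\K}$, and its infinitesimal generators are precisely the vector fields $(\alpha+\beta t+\gamma t^2)\,\partial/\partial t$. A vector field $Y=\xi(t)\partial/\partial t$ is a Lie symmetry of $M_h$ precisely when the flow of $Y$ commutes with $M_h$; since the flow of an $\mathfrak{sl}(2)$ generator is again a one-parameter subgroup of Möbius maps, commutation with $M_h$ holds if and only if $Y$ lies in the centralizer of $M_h$ in $\mathfrak{sl}(2,\K)$ — equivalently, in the Lie algebra of the one-parameter subgroup through $M_h$. A clean way to verify \eqref{E_LiesymMoeb} directly is to note that $M_h$ is the time-one map (or some time-$\tau(h)$ map) of the flow of the field with matrix $A_h-\tfrac{1}{2}\mathrm{tr}(A_h)\,\mathrm{Id}$ where $A_h=\begin{pmatrix} a(h)&b(h)\\ c(h)&d(h)\end{pmatrix}$; the induced vector field on $\widehat{\K}$ associated to a matrix $\begin{pmatrix}\alpha&\beta\\ \gamma&\delta\end{pmatrix}$ is $(\beta+(\delta-\alpha)t-\gamma t^2)\partial_t$, up to sign conventions, which matches \eqref{E_LiesymMoeb}. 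Alternatively one just checks the identity $\xi(M_h(t))=M_h'(t)\,\xi(t)$ with $\xi(t)=-b+(d-a)t+ct^2$ by a direct (routine but slightly tedious) computation, using $M_h'(t)=\dfrac{ad-bc}{(ct+d)^2}$ — this is the computation I would relegate to the proof proper.

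For steps (ii) and (iii): since $P_h$ is a proper (hence birational) parametrization, both $P_h$ and $P_h^{-1}$ are rational in $t$ and rational in the coefficients of $C_h$, and by Proposition \ref{P_mainpropo} (and \cite[Thm.~7.6]{SWPD} in the real case) the family $\{P_h\}$ and hence $a(h),b(h),c(h),d(h)$ can be taken rational in $h$; thus $X$ defined by \eqref{E_Liesym} is rational, defined on a nonempty open subset of $\U$. Finally, to see that $X$ solves \eqref{E_Lie-sym-char} globally, apply the change-of-coordinates remark from the first paragraph on each fibre: $\Phi_h:=P_h^{-1}$ conjugates $F|_{C_h}$ to $M_h$ and $Y_h$ is a Lie symmetry of $M_h$, so $(P_h)_*Y_h$ is a Lie symmetry of $F|_{C_h}$; since both sides of \eqref{E_Lie-sym-char} are tangent to the fibration (because $V$ is a common first integral — $Y_h$ kills the fibre coordinate and $F$ preserves $V$), the fibrewise identity assembles into the planar identity $X(F(p))=DF(p)X(p)$ for all $p$. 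The main obstacle I anticipate is a clean treatment of the transversal direction in step (iii): one must make sure that differentiating the relation ``$X$ tangent to $C_{V(x,y)}$'' does not introduce extra terms, i.e. that the fibrewise compatibility together with $DV\cdot X=0$ and $V\circ F=V$ is enough to conclude the full Jacobian identity. This is handled by writing $DF$ in a frame adapted to the fibration (one vector along the fibre, one transversal) and checking the compatibility equation componentwise; along the fibre it is the one-dimensional statement, and transversally both sides vanish on $X$ since $X$ has no transversal component and $F$ maps fibres to fibres.
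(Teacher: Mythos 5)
Your proposal is correct and follows essentially the same route as the paper: Lemma \ref{L_LiesymMoebiu} supplies the one-dimensional symmetry $Y_h$ of $M_h$ (your $\mathfrak{sl}(2)$ reading is a nice gloss, but the paper too ultimately just checks $\xi(M_h(t))=M_h'(t)\,\xi(t)$ directly), and the planar compatibility equation is obtained exactly as you describe, by differentiating $F\circ P_h=P_h\circ M_h$ in $t$ to get $DF\cdot DP_h(t)=DP_h(M_h(t))\,M_h'(t)$ and combining this with the fibrewise identity; no adapted frame is really needed, since both sides of \eqref{E_Lie-sym-char} already lie in the tangent line to the fibre through $F(p)$. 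One caveat: your assertion in step (ii) that the family $\{P_h\}$, and hence $a(h),b(h),c(h),d(h)$ and $X$, can be taken rational in $h$ is not justified and is not something Proposition \ref{P_mainpropo} gives you --- the paper explicitly warns that proper parametrizations of $\{V=h\}$ need not depend rationally on $h$, and rationality of the Lie symmetry is precisely the open problem of \cite{CM} toward which the theorem is said to be only a step; fortunately the statement being proved does not require it, as the paper's (and your) verification of \eqref{E_Lie-sym-char} is purely pointwise on each fibre.
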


In the next section, by using the formula given in Equation
(\ref{E_Liesym}), we will obtain the explicit Lie symmetries of some
particular examples of birational maps, see for instance Proposition
\ref{P_SL-BR}, Corollary \ref{C_SL-Saito-Saitoh}, or Section
\ref{Ss_Lie_Symm_Pal}. Prior to prove Theorem \ref{T_Liesym}, we
present the following preliminary result:

\begin{lem}\label{L_LiesymMoebiu}
A M\"obius map $M:\K\to\K$ given by
$$
M(t)=\frac{at+b}{ct+d},
$$
admits the Lie symmetry given by the vector field
$Y(t)=\left(-b+(d-a)t+ct^2\right) \dfrac{\partial}{\partial t}$.
\end{lem}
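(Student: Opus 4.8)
The plan is to verify the Lie symmetry condition \eqref{E_Lie-sym-char} directly for the one-dimensional setting, where it reduces to a simple ODE-compatibility identity. For a scalar map $M:\widehat{\K}\to\widehat{\K}$ and a vector field $Y(t)=g(t)\,\partial/\partial t$, the compatibility equation $X(F(p))=DF(p)X(p)$ becomes $g(M(t))=M'(t)\,g(t)$; so I first compute $M'(t)$ for the M\"obius map $M(t)=(at+b)/(ct+d)$, obtaining $M'(t)=(ad-bc)/(ct+d)^2$. Then, with the candidate $g(t)=-b+(d-a)t+ct^2$, I need to check the identity
\begin{equation}\label{E_checkident}
-b+(d-a)M(t)+c\,M(t)^2 = \frac{ad-bc}{(ct+d)^2}\left(-b+(d-a)t+ct^2\right).
\end{equation}

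To do this I substitute $M(t)=(at+b)/(ct+d)$ into the left-hand side and clear denominators by multiplying through by $(ct+d)^2$. The left-hand side becomes the polynomial $-b(ct+d)^2+(d-a)(at+b)(ct+d)+c(at+b)^2$; the task is then to expand this, collect powers of $t$, and recognize the result as $(ad-bc)\bigl(-b+(d-a)t+ct^2\bigr)$. This is the computational heart of the lemma, but it is genuinely routine: the coefficient of $t^2$ works out to $-bc^2+(d-a)ac+ca^2 = c(ad-bc)$, the constant term to $-bd^2+(d-a)bd+cb^2 = -b(ad-bc)$ wait let me recompute — $-bd^2+bd(d-a)+cb^2 = -bd^2+bd^2-abd+cb^2 = b(-ad+bc) = -b(ad-bc)$, and similarly the linear coefficient collapses to $(d-a)(ad-bc)$. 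So all three coefficients carry the common factor $ad-bc$, giving exactly the right-hand side after dividing by $(ct+d)^2$.

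I anticipate no serious obstacle here; the only points requiring a little care are bookkeeping in the expansion and the observation that the identity holds as an identity of rational functions on $\widehat{\K}$ (so in particular it extends to the point at infinity and to the pole of $M$ by continuity/the usual conventions on $\K P^1$). It is also worth noting that when $ad-bc=0$ the map $M$ is not a genuine M\"obius transformation (it is constant on its domain), so the nondegeneracy $ad-bc\neq 0$ is implicit; in any case the identity \eqref{E_checkident} remains true trivially. One may optionally remark, as motivation rather than proof, that $Y$ is precisely the infinitesimal generator whose time-$1$ flow, or more generally a reparameterized flow, produces $M$: the vector field $g(t)\,\partial/\partial t$ with $g$ a quadratic polynomial generates the $\mathrm{PSL}(2,\K)$-action on $\widehat{\K}$, which is exactly why its flow is compatible with every M\"obius map sharing the relevant invariant structure. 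But the clean route for the written proof is simply to exhibit $g$ and check the displayed identity.
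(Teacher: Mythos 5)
Your proof is correct and follows exactly the route the paper takes: the paper's own (one-line) proof simply says it suffices to check the compatibility equation $Y(M(t))=M'(t)Y(t)$, which is precisely the identity you verify, and your coefficient computations ($c(ad-bc)$, $(d-a)(ad-bc)$, $-b(ad-bc)$ after clearing $(ct+d)^2$) are all accurate. You merely carry out explicitly the ``straightforward'' verification the paper leaves to the reader.
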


The proof of the above Lemma is straightforward. We obtained the
vector field $Y$ constructively by searching a polynomial one, but
to prove the result it is enough to check that it satisfies the
compatibility equation  $Y(M(t))=M'(t)Y(t)$.

\begin{proof}[Proof of Theorem \ref{T_Liesym}] From Proposition
\ref{P_mainpropo} we have that for each curve $C_h$ the map
$F_{|C_h}$ is conjugate to the M\"obius transformation
\begin{equation}\label{E-aux-Moeb-prova-LS}
M_h(t)=P_h^{-1}\circ F_{|C_h}\circ
P_h(t)=\frac{a(h)t+b(h)}{c(h)t+d(h)}.
\end{equation}
By Lemma \ref{L_LiesymMoebiu}, the map $M_h$ has the Lie symmetry
$Y_h$ given by (\ref{E_LiesymMoeb}). Now, by taking the differential
of each parametrization $P_h(t)$ we get that for each $(x,y)\in
C_h\cap \mathcal{G}(F)$:
$$
X(x,y)=DP_h(t)Y_h(t)_{\small \left|\begin{array}{l}
                   t=P_h^{-1}(x,y) \\
                   h=V(x,y)
                 \end{array}\right.}=DP_h(P_h^{-1}(x,y))Y_h(P_h^{-1}(x,y))_{\small \left|\begin{array}{l}
                                      h=V(x,y)
                 \end{array}\right.}.
$$
Observe that, by construction, $X$ is a vector field tangent to each
curve $C_h$, so $V$ is a first integral of $X$.

Now we check that $X$ satisfies the compatibility equation
(\ref{E_Lie-sym-char}). Indeed, set $h\in\mathcal{H}$ and take
$(x,y)\in C_h$, then by Equation (\ref{E-aux-Moeb-prova-LS}) we have
$P_h^{-1}(F(x,y))=M_h(P_h^{-1}(x,y))$. On the other hand, since
$Y_h$ is a Lie symmetry of $M_h$ we have
$Y(M_h(P_h^{-1}(x,y)))=M_h'(P_h^{-1}(x,y))\,Y(P_h^{-1}(x,y))$, hence
$$\begin{array}{rl}
    X(F(x,y))&  =DP_h(P_h^{-1}(F(x,y)))\,Y(P_h^{-1}(F(x,y)))\\
   &=DP_h(M_h(P_h^{-1}(x,y)))\,Y(M_h(P_h^{-1}(x,y)))\\
   &=DP_h(M_h(P_h^{-1}(x,y)))\,M_h'(P_h^{-1}(x,y))\,Y(P_h^{-1}(x,y))).
  \end{array}
$$
Notice that, again by Equation (\ref{E-aux-Moeb-prova-LS}), we have
$$DF(x,y)
    =DP_h(M_h(P_h^{-1}(x,y)))\,M_h'(P_h^{-1}(x,y))\,DP_h^{-1}(x,y),
$$
hence
$$
DF(x,y)\,DP_h(P_h^{-1}(x,y))=DP_h(M_h(P_h^{-1}(x,y)))\,M_h'(P_h^{-1}(x,y)),
$$
and therefore
$$
X(F(x,y))=DF(x,y)\,DP_h(P_h^{-1}(x,y))\,Y(P_h^{-1}(x,y))= DF(x,y)\,
X(x,y).
$$
\end{proof}

Notice that Theorem \ref{T_Liesym} is an step towards a positive
answer of the following question: \emph{Given a birational
integrable map with rational first integrals, does always exists a
rational Lie symmetry?} \cite[Open problem, p.252]{CM}.

Lie symmetries are interesting objects in the theory of discrete
integrability, \cite{HBQC}. In particular, their existence implies
that the one-dimensional dynamics of  real maps $F$ on an invariant
curve is essentially linear, \cite{CGM08}. Hence, as a consequence
of the above result and Theorem 1 in \cite{CGM08}, we obtain the
following result:

\begin{corol}\label{C_Lineals}
Let $F$ be a real birational map with a rational first integral $V$,
preserving the associated genus $0$ fibration given by
$\{C_h\}_{h\in \mathcal{H}}$, and  let $X$ be a Lie symmetry of $F$
with first integral $V$. Let $\gamma_h$ be a connected component of
$C_h\cap\mathcal{G}(F)$, then:
\begin{enumerate}[(a)]
  \item   If  $\gamma_h$ of $C_h$ is homeomorphic to $\su$, then
 $F_{|\gamma_h}$ is conjugate to a rotation with rotation
number given by $\tau(h)/T(h),$ where $T(h)$ is the period of $C_h$
as a periodic orbit  of (\ref{E_Liesym}), and $\tau(h)$ is defined
by the equation $F(x,y)=\varphi(\tau(h),(x,y))$, where $\varphi$
denotes the flow of $X$, and $(x,y)\in C_h$.
  \item  If $\gamma_h$ is homeomorphic to $\R$, then $F_{|\gamma_h}$ is conjugate to a translation.
  \item If $\gamma_h$ is a point, then it is a fixed point of
  $F$.
\end{enumerate}
\end{corol}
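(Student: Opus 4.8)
The plan is to invoke the cited Theorem 1 of \cite{CGM08}, which classifies the dynamics of a map that admits a Lie symmetry, and to check that all of its hypotheses are met in our situation. First I would recall that, by hypothesis, $F$ is a real birational map with rational first integral $V$ preserving the genus $0$ fibration $\{C_h\}$, and $X$ is a Lie symmetry of $F$ with the same first integral $V$; in particular $X$ is tangent to each $C_h$, so $F$ maps each connected component $\gamma_h$ of $C_h\cap\mathcal G(F)$ into itself and the flow $\varphi$ of $X$ leaves each $\gamma_h$ invariant. The key structural fact I would use is that a connected $1$-manifold is homeomorphic to either $\su$, $\R$, or a point, which is exactly the trichotomy in the statement; each case is then handled separately.

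For case (a), $\gamma_h\cong\su$ means $\gamma_h$ is a periodic orbit of the vector field $X$ with some minimal period $T(h)>0$; the flow $\varphi(\cdot,\cdot)$ restricted to $\gamma_h$ is then conjugate, via the time parametrization, to the rigid rotation flow on $\R/T(h)\Z$. Since $F$ is a Lie symmetry relation gives $F(p)=\varphi(\tau(h),p)$ for all $p\in\gamma_h$ — here $\tau(h)$ is well defined because the existence of the Lie symmetry forces $F$ to coincide on $\gamma_h$ with the time-$\tau(h)$ map of its flow (this is precisely the content of the compatibility equation integrated along $\gamma_h$; see the discussion preceding Corollary \ref{C_Lineals}) — $F_{|\gamma_h}$ is conjugate to translation by $\tau(h)$ on $\R/T(h)\Z$, i.e.\ to a rotation of rotation number $\tau(h)/T(h)$. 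For case (b), $\gamma_h\cong\R$ means $X$ has no zeros on $\gamma_h$ and the orbit is not periodic, so the flow is conjugate to the translation flow on $\R$, and again $F_{|\gamma_h}=\varphi(\tau(h),\cdot)_{|\gamma_h}$ is conjugate to a translation by $\tau(h)$. For case (c), $\gamma_h$ a single point: being invariant under $F$, a single point is necessarily a fixed point of $F$.

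The main obstacle — really the only nontrivial point — is justifying that $F$ restricted to $\gamma_h$ is genuinely the time-$\tau(h)$ map of the flow of $X$ for a \emph{well-defined} $\tau(h)$ depending only on $h$, rather than merely satisfying the infinitesimal compatibility relation $X\circ F=DF\cdot X$. This is exactly where Theorem 1 of \cite{CGM08} does the work: the existence of a Lie symmetry with a common first integral, together with the one-dimensionality of the restricted dynamics, upgrades the infinitesimal relation to the statement that $F_{|\gamma_h}$ lies in the one-parameter flow, and pins down $\tau(h)$. So the proof reduces to verifying the setting of that theorem applies — $F$ real and analytic on $\U$, $X$ a Lie symmetry sharing the first integral $V$, and $\gamma_h$ a connected component of a level set contained in the good set — and then quoting it, reading off the three topological cases. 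I would phrase the proof as: ``The hypotheses of Theorem 1 in \cite{CGM08} are satisfied by $F$, $X$ and each $\gamma_h$; the conclusions (a), (b), (c) follow directly, using in case (a) that the rotation number of the time-$\tau(h)$ map of a periodic flow of period $T(h)$ is $\tau(h)/T(h)$.''
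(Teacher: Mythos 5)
Your proposal is correct and takes essentially the same route as the paper: the corollary is stated there without a written proof, being presented as an immediate consequence of the existence of the Lie symmetry (Theorem \ref{T_Liesym}) together with Theorem 1 of \cite{CGM08}, which is exactly the theorem you invoke and whose hypotheses you verify. Your additional discussion of why $F_{|\gamma_h}$ is the time-$\tau(h)$ map of the flow, and of the topological trichotomy for connected $1$-manifolds, just makes explicit what that cited theorem supplies.
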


Recall that a map $F$ defined on an open set of $\R^2$, preserves a
measure absolutely continuous with respect the Lebesgue's one with
non-vanishing density $\nu$,  if $m(F^{-1}(B))=m(B)$ for any
Lebesgue measurable set $B$, where $ m(B)=\int_{B} \nu(x,y)\,dxdy$.
The existence of a Lie symmetry preserving the integral $V$
guarantees the existence of certain preserved measures as a
consequence of the results in \cite{CGM08}:

\begin{corol}\label{C_Mesura}
Let $F$ be a real birational map $F$, with a rational first integral
$V$, preserving the genus $0$ fibration given by  $\{V=h\}_{h\in
\mathcal{H}}$. Then there are some open disjoint sets $\U^+$ and
$\U^-\subset \U$ (possibly some of them, but not both, empty) such
that:
\begin{enumerate}[(a)]
  \item If $F$ preserves orientation on $\U$, then on each set $\U^+$ and $\U^-$,  it preserves  an invariant
  measure absolutely continuous with respect the Lebesgue one.
  \item On each set $\U^+$ and $\U^-$, the map $F^2$ preserves  an invariant
  measure absolutely continuous with respect the Lebesgue one.
\end{enumerate}
\end{corol}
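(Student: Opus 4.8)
The plan is to invoke Theorem~\ref{T_Liesym} together with the relevant result from \cite{CGM08} relating Lie symmetries to invariant measures. First I would recall that by Theorem~\ref{T_Liesym} the map $F$ possesses a Lie symmetry $X$ with first integral $V$, hence $X$ is tangent to each curve of the genus $0$ fibration $\{V=h\}$. The key dichotomy comes from the set where $X$ vanishes: let $Z=\{p\in\U : X(p)=0\}$ be the zero set of the vector field, which is a proper analytic subset of $\U$ (since $X$ is built from the nontrivial polynomial vector fields $Y_h$ of Lemma~\ref{L_LiesymMoebiu} via the proper parameterizations), so $\U\setminus Z$ is open and dense. On $\U\setminus Z$ the components of $X=(X_1,X_2)$ do not vanish simultaneously; a standard computation (see \cite{CGM08}) shows that the sign of the expression that appears as the density—essentially $1/(X_1\,\partial_y V - X_2\,\partial_x V)$, or equivalently a suitable combination making $X$ a Hamiltonian-type field for the measure—is locally constant. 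One then sets $\U^+$ and $\U^-$ to be the (open) subsets of $\U\setminus Z$ where this density is positive and negative respectively.

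The second step is to verify that on each of $\U^+$ and $\U^-$ the candidate density $\nu$ is a genuine invariant density for $F$. Here I would use the characterization from \cite[Theorem in Sec.~...]{CGM08}: if $X$ is a Lie symmetry of $F$ and $\nu$ is a density such that $X$ is divergence-free with respect to $\nu$ (i.e.\ $\operatorname{div}(\nu X)=0$), then $F$ preserves the measure $\nu\,dx\,dy$ provided $F$ preserves orientation; and $F^2$ always preserves it, because even when $F$ reverses orientation, $F^2$ preserves it and the flow of $X$ still commutes appropriately with $F$. The existence of such a $\nu$ locally is classical for a nonvanishing planar vector field (the integrating-factor/Hamiltonian argument), and since $V$ is a common first integral one can take $\nu$ of the form $\nu=1/(\text{Jacobian-type expression})$, which is exactly where the sign splitting into $\U^+,\U^-$ is forced: $\nu$ must be nonvanishing, so one restricts to the connected pieces of constant sign. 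This gives part~(a) under the orientation hypothesis and part~(b) unconditionally.

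The main obstacle I anticipate is bookkeeping rather than conceptual: one must check that the formula \eqref{E_Liesym} for $X$ really does produce a vector field that is well defined and real-analytic on all of $\U$ (not merely on the dense open set where the parameterizations $P_h$ are local diffeomorphisms), so that its zero set is a proper subset and $\U\setminus Z$ is dense; and one must make sure the density obtained from the integrating-factor construction extends consistently across the fibers as $h$ varies, i.e.\ that it is measurable (indeed analytic off a thin set) on the open sets $\U^\pm$. Both points are handled by the constructive nature of Theorem~\ref{T_Liesym}: since $P_h$ is a proper (birational) parameterization, $DP_h$ and $P_h^{-1}$ are rational in $(t,h)$, so $X$ is rational in $(x,y)$ and the relevant densities are rational as well, with zero/pole sets being proper algebraic varieties. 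Finally I would note that the possibility ``one of $\U^+,\U^-$ empty, but not both'' reflects the case where the sign of the density is constant on all of $\U\setminus Z$, while ``not both empty'' holds because $\U\setminus Z$ is nonempty (as $X\not\equiv 0$). This completes the deduction of Corollary~\ref{C_Mesura} from Theorem~\ref{T_Liesym} and the cited results of \cite{CGM08}. \qed
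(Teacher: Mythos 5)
Your proposal is correct and follows essentially the same route as the paper: write the Lie symmetry of Theorem~\ref{T_Liesym} as $X=\mu\,\left(-V_y\partial_x+V_x\partial_y\right)$, translate the compatibility equation (via \cite[Theorem 12]{CGM08}) into $\mu\circ F=\det(DF)\,\mu$, and take $\U^{\pm}=\{\pm\mu>0\}$ with density $\pm 1/\mu$, the invariance of $\U^{\pm}$ under $F$ (resp.\ $F^2$) following from the sign of $\det(DF)$ (resp.\ $\det(DF^2)$). The only slip is your tentative density $1/(X_1\partial_y V-X_2\partial_x V)$, which equals $-1/(\mu\,|\nabla V|^2)$ and is \emph{not} invariant in general since $|\nabla V|$ does not transform by $\det(DF)$; the correct choice is $\pm 1/\mu$ with $\mu=X_2/V_x=-X_1/V_y$, i.e.\ exactly the ``suitable combination making $X$ a Hamiltonian-type field'' that you defer to.
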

\begin{proof}
Observe that $F$ has the Lie symmetry given by Equation
(\ref{E_Liesym}). By construction this symmetry preserves the curves
$\{V=h\}$ for each $h\in\mathcal{H}$, hence it must be a multiple of
the hamiltonian vector field associated to $V$, that is
\begin{equation}\label{E-hamiltonian}
{X}(x,y)=\mu(x,y)\,\left(- V_y(x,y)\frac{\partial}{\partial
x}+V_x(x,y) \frac{\partial}{\partial y}\right).
\end{equation}
In this case, the compatibility  equation (\ref{E_Lie-sym-char}), is
equivalent to
\begin{equation}\label{E_mu}
    \mu(F(x,y))=\det(DF(x,y))\,\mu(x,y),
\end{equation}
see \cite[Theorem 12 (ii)]{CGM08}. Set $\U^{\pm}:=\{(x,y)\in\U$:
$\pm\mu(x,y)>0\}$. Observe that some of them, but not both, can be
empty.

Suppose now that $F$ preserves orientation in $U$, then Equation
(\ref{E_mu}) implies that both sets $\U^+$ and $\U^-$ are invariant
by $F$. Taking
$$\nu_{\pm}(x,y):=\pm\dfrac{1}{\mu(x,y)},$$
we get some invariant measures on each set $\U^\pm$ given by $
m_{\pm}(\mathcal{B})=\int_{\mathcal{B}} \nu_{\pm}(x,y)dxdy$ for any
Lebesgue measurable set $\mathcal{B}\subset \U^\pm$. Indeed, take
$\mathcal{B}\in \U^+$ a Lebesgue measurable set, then by using the
change of variables formula and Equation (\ref{E_mu}) we get:
$$\begin{array}{rl}
m_+(F^{-1}(\mathcal{B}))&= \displaystyle{\int}_{F^{-1}(\mathcal{B})}
\nu_{+}(x,y)dxdy=\displaystyle{\int}_{\mathcal{B}}
\nu_+(F(x,y))\,\det(DF(x,y))dxdy =\\
     & =\displaystyle{\int}_{\mathcal{B}} \dfrac{1}{\mu(F(x,y))}\,\det(DF(x,y))dxdy
=\displaystyle{\int}_{\mathcal{B}}
\dfrac{1}{\mu(x,y)}\,dxdy=m_+(\mathcal{B})
  \end{array}
\
$$
In an analogous way we can prove that $m_-$ is an invariant measure
on $\U^-$, thus proving (a), and that $
m(\mathcal{B})=\int_{\mathcal{B}} 1/\mu(x,y)dxdy$ is an invariant
measure of $F^2$ in $\U^-\cup\U^+$, thus proving (b).
\end{proof}

\subsection{Detection of conjugations via conjugations of M\"obius
maps}\label{}

Another consequence of Proposition \ref{P_mainpropo}  is that it is
possible to easily verify if two birational integrable maps
preserving a genus $0$ fibration  are conjugate, by checking if
their associated M\"obius transformations are conjugate. This is
summarized in the result below which, additionally, allows to
construct the explicit conjugations. In Section \ref{Ss_Pal_conju}
we use this result to detect the conjugations between  the maps
associated to the six recurrences presented by F.~Palladino in
\cite{P}.

Prior to state the result we introduce the main assumptions and
notation. In the following we  assume that $F$ and $G$ are
integrable birational maps in $\K$ with first integrals $V$ and $W$
respectively, and that there exists some nonempty open sets
$\mathcal{H}\subseteq \mathrm{Im}(V)$ and $\mathcal{K}\subseteq
\mathrm{Im}(W)$ such that the sets $\mathcal{U}=\{(x,y)$:
$V(x,y)=h\in\mathcal{H}\}$ and $\mathcal{V}=\{(x,y)$:
$W(x,y)=k\in\mathcal{K}\}$ are nonempty, and each curve
$C_h=\{V=h\}$ for $h\in \mathcal{H}$ and $D_k=\{W=k\}$ for $k\in
\mathcal{K}$ are irreducible in $\C$ and rational.

Let $\{P_h\}_{h\in\mathcal{H}}$ and $\{Q_k\}_{k\in\mathcal{K}}$ be
families of proper parameterizations of the family of curves
$\{C_h\}_{k\in\mathcal{K}}$ and $\{D_k\}_{k\in\mathcal{K}}$
respectively,  and let $M_h=P_h^{-1}\circ F\circ P_h$ and
$N_{k}=Q_k^{-1}\circ G\circ Q_k$ denote the M\"obius transformations
associated to $F_{|C_h}$ and $G_{|D_k}$, respectively. Let
$\mathcal{G}(M_h)$ denote the good set of $M_h$ in $\K$.

\begin{propo}\label{P_conjgacions} Under the previous
assumptions,  $F$ is conjugate with $G$  via a conjugation
$F=\Psi^{-1}\circ G\circ \Psi$, where $\Psi$ is a correspondence
between the curves $C_h$ in $\U$ and the curves $D_k$ in $\V$, if
and only if there exists a correspondence $f$ between $\mathcal{H}$
and $\mathcal{K}$, such that for all $h\in\mathcal{H}$ there exists
an invertible map $m_h$ defined in $\mathcal{G}(M_h)$ such that
$M_h=m_h^{-1}\circ N_k \circ m_h$ for $k=f(h)$. Furthermore, the
conjugation is given by:
\begin{equation}\label{E_conjugacions}
\Psi(x,y)=\left. Q_{f(h)}\circ m_{h}\circ
P_h^{-1}(x,y)\right|_{\small
                   h=V(x,y)}
\end{equation}
and
\begin{equation}\label{E_conjugacions2}
 \Psi^{-1}(u,v)=\left. P_{f^{-1}(k)}\circ m_{f^{-1}(k)}^{-1}\circ
Q_k^{-1}(u,v)\right|_{\small
                   k=W(u,v)}
\end{equation}
\end{propo}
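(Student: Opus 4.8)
The plan is to prove both implications by exploiting that, on each curve, everything is transported to $\widehat{\K}$ via the proper parameterizations, where conjugacy becomes conjugacy of one-dimensional (M\"obius) maps.

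\textbf{($\Leftarrow$).} Assume the correspondence $f:\mathcal{H}\to\mathcal{K}$ and the family $\{m_h\}$ with $M_h=m_h^{-1}\circ N_{f(h)}\circ m_h$ exist. Define $\Psi$ by Equation (\ref{E_conjugacions}). First I would check that $\Psi$ maps $\U$ to $\V$, sending $C_h$ into $D_{f(h)}$: this is immediate since $P_h^{-1}(x,y)\in\widehat{\K}$, then $m_h$ keeps us in $\widehat{\K}$, and $Q_{f(h)}$ lands on $D_{f(h)}$. Its inverse is given by (\ref{E_conjugacions2}): compose the two expressions and use $m_{f^{-1}(k)}\circ m_{f^{-1}(k)}^{-1}=\mathrm{id}$, $P_{h}\circ P_h^{-1}=\mathrm{id}$, $Q_k\circ Q_k^{-1}=\mathrm{id}$, together with $f(f^{-1}(k))=k$; the subtle point is that these identities hold only off a proper (measure-zero) subset of the curves, so $\Psi$ is a correspondence rather than a globally defined diffeomorphism, which is exactly what the statement claims. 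Then I would verify the conjugacy relation fiberwise: fix $h$, take $(x,y)\in C_h\cap\mathcal{G}(F)$, and write $k=f(h)$. Using $M_h=P_h^{-1}\circ F\circ P_h$ and $N_k=Q_k^{-1}\circ G\circ Q_k$, compute
\[
\Psi\circ F(x,y)=Q_k\circ m_h\circ P_h^{-1}\circ F(x,y)=Q_k\circ m_h\circ M_h\circ P_h^{-1}(x,y).
\]
Since $m_h\circ M_h=N_k\circ m_h$, this equals $Q_k\circ N_k\circ m_h\circ P_h^{-1}(x,y)=Q_k\circ N_k\circ Q_k^{-1}\circ\Psi(x,y)=G\circ\Psi(x,y)$, where we use that $V$ is constant along the $F$-orbit so the fiber index $h$ is preserved. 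Hence $F=\Psi^{-1}\circ G\circ\Psi$.

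\textbf{($\Rightarrow$).} Suppose $F=\Psi^{-1}\circ G\circ\Psi$ with $\Psi$ a correspondence carrying the fibration $\{C_h\}$ to $\{D_k\}$. Since $\Psi$ respects fibers, it induces the correspondence $f:\mathcal{H}\to\mathcal{K}$ with $\Psi(C_h)\subseteq D_{f(h)}$ (one checks $f$ is well-defined and invertible because $\Psi$ is, using that $W\circ\Psi$ is constant on each $C_h$ and that distinct fibers go to distinct fibers). For each $h$ set
\[
m_h:=Q_{f(h)}^{-1}\circ\Psi\circ P_h,
\]
a map from $\widehat{\K}$ to $\widehat{\K}$, invertible on $\mathcal{G}(M_h)$ with inverse $P_h^{-1}\circ\Psi^{-1}\circ Q_{f(h)}$. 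Transporting the relation $\Psi\circ F=G\circ\Psi$ through the parameterizations gives, on the fiber $h$ with $k=f(h)$,
\[
m_h\circ M_h=Q_k^{-1}\circ\Psi\circ P_h\circ(P_h^{-1}\circ F\circ P_h)=Q_k^{-1}\circ\Psi\circ F\circ P_h=Q_k^{-1}\circ G\circ\Psi\circ P_h,
\]
and since $G\circ\Psi\circ P_h=G\circ Q_k\circ m_h=Q_k\circ N_k\circ m_h$, we obtain $m_h\circ M_h=N_k\circ m_h$, i.e. $M_h=m_h^{-1}\circ N_k\circ m_h$, as required. Finally, unwinding $m_h=Q_{f(h)}^{-1}\circ\Psi\circ P_h$ recovers $\Psi=Q_{f(h)}\circ m_h\circ P_h^{-1}$ on $C_h$, which is precisely (\ref{E_conjugacions}); inverting gives (\ref{E_conjugacions2}).

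The main obstacle is bookkeeping about domains: the parameterizations $P_h$, $Q_k$ are birational but not bijective everywhere, and $\Psi$ is only a ``correspondence'', so all the cancellations above hold only outside proper algebraic subsets of each curve (and off the exceptional sets of $F$, $G$). Making this precise — i.e. checking that the composed exceptional sets remain proper/measure zero so that $\Psi$ genuinely conjugates $F$ and $G$ as dynamical systems on the good sets, and that the induced $f$ is genuinely invertible on $\mathcal{H}$ — is where the care is needed; the algebra of the conjugacy identity itself is a short computation once transported fiberwise to $\widehat{\K}$ via Proposition \ref{P_mainpropo}.
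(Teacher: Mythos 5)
Your proposal is correct and follows essentially the same route as the paper: defining $\Psi$ by the stated formula and verifying the conjugacy fiberwise using $m_h\circ M_h=N_{f(h)}\circ m_h$, and conversely extracting $m_h=Q_{f(h)}^{-1}\circ\Psi\circ P_h$ from a fibration-respecting conjugation. The only cosmetic difference is that you verify $\Psi\circ F=G\circ\Psi$ while the paper computes $\Psi^{-1}\circ G\circ\Psi=F$ directly; your added remarks on domains and exceptional sets are consistent with (and slightly more careful than) the paper's treatment.
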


\begin{proof} Suppose that there exists a bijection $k=f(h)$  between $\mathcal{H}$
and $\mathcal{K}$, such that for all $h\in\mathcal{H}$ there exists
an invertible map $m_h$ defined in $\mathcal{G}(M_h)$ such that
$M_h=m_h^{-1}\circ N_k \circ m_h$ for $k=f(h)$. Consider the map
$\Psi$ given by Equation (\ref{E_conjugacions}), it is easy to check
that, by construction,  it maps  any curve $C_h$ to the curve
$D_{f(h)}$. Hence $W(\Psi(x,y))=f(h)$ for each $(x,y)\in C_h$.

Set $\Phi(u,v)=\left. P_{f^{-1}(k)}\circ m_{f^{-1}(k)}^{-1}\circ
Q_k^{-1}(u,v)\right|_{\small
                   k=W(u,v)}.$
Now we prove that $\Phi\circ \Psi=\mathrm{Id}$. Indeed, take
$(x,y)\in C_h$, set $k:=f(h)=W(\Psi(x,y))$, then
$$
 \Phi\circ \Psi (x,y) =P_{f^{-1}(k)}\circ m_{f^{-1}(k)}^{-1}\circ
Q_{k}^{-1}  \circ Q_{f(h)}\circ m_{h}\circ P_{h}^{-1}(x,y)=$$$$
=P_{h}\circ m_{h}^{-1}\circ Q_k^{-1} \circ Q_{k}\circ m_{h}\circ
P_{h}^{-1}(x,y)=(x,y),
$$
and analogously $ \Psi\circ \Phi=\mathrm{Id}$, hence
$\Psi^{-1}=\Phi$.

Now we prove that $\Psi$ is a conjugation. Indeed,
$$
\Psi^{-1}\circ G\circ \Psi(x,y)= P_{h}\circ m_{h}^{-1}\circ
Q_k^{-1}\circ G\circ Q_{k}\circ m_{h}\circ P_{h}^{-1}(x,y)=
$$
$$
= P_{h}\circ m_{h}^{-1}\circ N_k \circ m_{h}\circ P_{h}^{-1}(x,y)=
 P_{h}\circ M_h\circ P_{h}^{-1}(x,y)=F(x,y).
$$

Conversely, suppose that  $F$ is conjugate with $G$ in $\U$, via a
conjugation $F=\Psi^{-1}\circ G\circ \Psi$ such that for all
$h\in\mathcal{H}$, there exists $k\in\mathcal{K}$ such that
$\Psi(C_h)=D_k$ and reciprocally, for all $k\in\mathcal{K}$, there
exists $h\in\mathcal{H}$ such that $\Psi^{-1}(D_k)=C_h$. This fact
allows to introduce a correspondence $f$ between $\mathcal{H}$ and
$\mathcal{K}$ via $k=W\circ \Psi_{|C_h}$ and $h=V\circ
\Psi^{-1}_{|D_k}$.

Take $(x,y)\in C_h$, set $k:=f(h)=W(\Psi(x,y))$, then
$$
F(x,y)=P_h\circ M_h\circ P_h^{-1}(x,y)= \Psi^{-1}\circ G \circ \Psi,
$$
and therefore for all $t=P_h^{-1}(x,y)\in\mathcal{G}(M_h)$:
$$
M_h(t)=P_h^{-1}\circ \Psi^{-1}\circ G \circ \Psi\circ P_h(t)=
P_h^{-1}\circ \Psi^{-1}\circ Q_k\circ N_k\circ Q_k^{-1} \circ
\Psi\circ P_h(t).
$$
So there exists a conjugation $m_h=Q^{-1}_{f(h)} \circ \Psi\circ
P_h(t)$ between $M_h$ and $N_k$ for $k=f(h)$.~\end{proof}

Observe that if the dependence on the parameters $h$ and $k$ in the
terms of the expression (\ref{E_conjugacions}) and
(\ref{E_conjugacions2}) is rational, and $f$ is a birational
function, then the conjugation $\Psi$ is a birational map, and
therefore global. This is the case of the conjugations constructed
in the proof of Proposition \ref{P_conju-palladino}. However, notice
that, this is not the general situation since not every proper
parametrization of a curve of the form $\{V=h\}$ has a rational
dependence on the parameter $h$. This is the case of the ones
obtained when we use the parametrization algorithms implemented in
some computer algebra software, \cite{H}.

\section{Applications}\label{S_applic}

In this section we show how the results in Section \ref{S_Main} can
be used to analyze the global dynamics of birational maps preserving
genus $0$ fibrations in a unified way. The considered examples
include a one-parameter family of maps previously studied by
G.~Bastien and M.~Rogalski in \cite{BRcon}, a map introduced by
S.~Saito and N.~Saitoh in \cite{SS}, and the maps associated to some
difference equations considered by F.~Palladino in \cite{P}. The
method can also be applied to study some  other maps appearing in
the literature, for instance the ones in \cite{BR2} and \cite{SG}.

\medskip

As we will need to recall it in every example, and although it is
well-known, for convenience of the reader we summarize the dynamics
of M\"obius maps in the following well-know result, see
\cite[Section 2.2]{CGM1} for instance.

\begin{propo}\label{P_dinamica-moeb}
Consider the map $M(t)=(at+b)/(ct+d)$, where $a,b,c,d\in\K$, with
$c\neq 0$, defined for  $t\in \widehat{\K}$. Set
$\Delta=(d-a)^2+4bc,$ and
$\xi=(a+d+\sqrt{\Delta})/(a+d-\sqrt{\Delta})$.
\begin{enumerate}
  \item[(a)] If $\Delta\neq 0$, then there are two fixed points
  $t_0$ and $t_1$ in $\widehat{\K}$, furthermore
\begin{enumerate}
  \item[$(a_1)$] When $|\xi|\neq 1$, one of the fixed points,  say $t_j$, is
  an attractor of $M$ in $\widehat{\K}\setminus\{t_{j+1\, (\mathrm{mod}\, 2)}\}$.
    \item[$(a_2)$] When $|\xi|=1$, then $M$ is conjugated to a
    rotation in $\widehat{\K}$ with rotation number
    $\theta:=\arg(\xi)\, (\mathrm{mod}\,
    2\pi)$. In particular, $M$ is periodic with minimal period $p$ if and only if $\xi$ is a primitive root of the unity.
     Furthermore, $W(t)=\left|(t-t_0)/(t-t_1)\right|$ is a first integral of $M$.
\end{enumerate}
  \item[(b)] If $\Delta=0$, then there is a unique fixed point $t_0$ which is
  a    global attractor in
  $\widehat{\K}$ of $M$.
\end{enumerate}
\end{propo}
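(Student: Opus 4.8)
The plan is to reduce everything to the classical theory of M\"obius maps by a suitable normalization, and then read off the three cases from the conjugacy class of $M$. Since $c\neq 0$, the point at infinity is not fixed and the fixed-point equation $ct^2+(d-a)t-b=0$ is a genuine quadratic in $t$, whose discriminant is exactly $\Delta=(d-a)^2+4bc$; its roots $t_0,t_1\in\widehat{\K}$ are the candidate fixed points. First I would dispose of case (b): if $\Delta=0$ there is a unique (double) root $t_0$, and after conjugating by the M\"obius map $S(t)=1/(t-t_0)$ the map $M$ becomes an affine map fixing $\infty$ with no other fixed point, i.e.\ a translation $t\mapsto t+\beta$ with $\beta\neq 0$; a translation has $\infty$ as a global attractor on $\widehat{\K}$, and pulling back by $S$ shows $t_0$ is a global attractor of $M$ on $\widehat{\K}$.

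For case (a), with $\Delta\neq 0$ there are two distinct fixed points $t_0,t_1$. I would conjugate by $S(t)=(t-t_0)/(t-t_1)$, which sends $t_0\mapsto 0$ and $t_1\mapsto\infty$, so that $\widetilde M:=S\circ M\circ S^{-1}$ fixes $0$ and $\infty$ and hence has the form $\widetilde M(w)=\lambda w$ for some multiplier $\lambda\in\K\setminus\{0\}$. The key computation is to identify $\lambda$: since conjugation preserves the trace-to-determinant relation, $\lambda$ is the ratio of the two eigenvalues of the matrix $\begin{pmatrix} a & b\\ c & d\end{pmatrix}$, and a short calculation gives $\lambda=\xi=(a+d+\sqrt\Delta)/(a+d-\sqrt\Delta)$ (one simply checks that $a+d\pm\sqrt\Delta$ are, up to a common scalar, the two eigenvalues). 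From the normal form $w\mapsto\lambda w$ everything is immediate: if $|\lambda|\neq 1$ then $0$ or $\infty$ is an attractor on $\widehat{\K}$ minus the other fixed point (depending on whether $|\lambda|<1$ or $|\lambda|>1$), which after pulling back by $S^{-1}$ gives $(a_1)$; and if $|\lambda|=1$, write $\lambda=e^{i\theta}$ with $\theta=\arg\xi$, so $\widetilde M$ is the rotation by $\theta$, giving that $M$ is conjugate to a rotation with rotation number $\theta\,(\mathrm{mod}\,2\pi)$, periodic of minimal period $p$ exactly when $e^{ip\theta}=1$ with $p$ minimal, i.e.\ $\xi$ a primitive $p$-th root of unity. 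Finally, for the first integral in $(a_2)$: $|w|$ is obviously invariant under $w\mapsto e^{i\theta}w$, so $|S(t)|=|(t-t_0)/(t-t_1)|$ is a first integral of $M$, which is the asserted $W(t)$.

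The only mildly delicate point — the ``main obstacle'', though it is really just bookkeeping — is verifying the formula $\lambda=\xi$ rather than some other combination (e.g.\ its reciprocal), since the labelling of the two fixed points $t_0,t_1$ is a matter of convention; one fixes the convention so that $S(t)=(t-t_0)/(t-t_1)$ produces multiplier exactly $\xi$, which pins down which root is $t_0$. Everything else is a direct translation of the standard classification of M\"obius transformations into elliptic (here $|\xi|=1$), hyperbolic/loxodromic ($|\xi|\neq 1$, $\Delta\neq 0$) and parabolic ($\Delta=0$) types, and the statements about attractors, rotation numbers, periodicity and the invariant $W$ are then read off the diagonal normal form $w\mapsto\xi w$ and the translation normal form. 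I would present the proof in exactly this order: the quadratic and its discriminant; the parabolic case via $S(t)=1/(t-t_0)$; the case of two fixed points via $S(t)=(t-t_0)/(t-t_1)$ together with the identification of the multiplier $\xi$; and then the case split on $|\xi|$.
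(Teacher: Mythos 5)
Your proof is correct and is the standard normal-form classification of M\"obius transformations; the paper in fact gives no proof of this proposition, stating it as well known and deferring to \cite[Section 2.2]{CGM1}, which proceeds in essentially the same way you do (fixed-point quadratic with discriminant $\Delta$, multiplier equal to the ratio of the eigenvalues $(a+d\pm\sqrt{\Delta})/2$, reduction to $w\mapsto\xi w$ or to a translation). The one point worth making explicit in your write-up is the case $\K=\R$ with $\Delta<0$: the fixed points are then complex conjugates rather than points of $\widehat{\R}$, and your conjugation $S(t)=(t-t_0)/(t-t_1)$ carries $\widehat{\R}$ onto the unit circle (since $|t-t_0|=|t-\bar{t}_0|$ for real $t$), on which $w\mapsto\xi w$ with $|\xi|=1$ is indeed a rotation, so the argument goes through but item $(a_2)$ must be read with $t_0,t_1\in\widehat{\C}$.
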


\subsection{The Bastien and Rogalski map
revisited}\label{S_BR}

We consider the planar birational map defined in $\R^{2,+}$, given
by
\begin{equation}\label{E_BR}
F_a(x,y)=\Big(y,\frac{a-y+y^2}{x}\Big)
\end{equation}
with $a>1/4$. The periodic structure  of this map was characterized
by G.~Bastien and M.~Rogalski as a part of the study of the
difference equation $u_{n+2}=(a-u_{n+1}+u_{n+1}^2)/u_n$, see
\cite{BRcon}. The map $F_a$ possess the rational first integral
$$V_a(x,y)=\frac{x^2+y^2-x-y+a}{x y},$$ and
it preserves the fibration of $\R^{2,+}$ given by the algebraic
curves of genus 0 (conics):
\begin{equation}\label{E_foliacioBR}
C_h=\{x^2+y^2-x-y+a-h x y=0\},
\end{equation}
where $h\in\left(2-{1}/{a},\infty\right)$. The map has a unique
fixed point in $\R^{2,+}$, given by $(a,a)$ with energy level
$h_c:=V(a,a)=2-{1}/{a}$. This point is an elliptic one since
$a>1/4$. Let $\widetilde{C}_h=\{[x:y:z],\,x^2+y^2-h x
y-xz-yz+az^2=0\}$ and
$\widetilde{F}_a([x:y:z])=[xy:az^2-yz+y^2:xz]$, denote the
extensions of $C_h$ and $F_a$  to $\R P^2$. We prove:
\begin{propo}\label{P_exBR-p1}
Set $a>1/4$ and $h_c=2-1/a$, then the following statement hold:
\begin{enumerate}[(a)]
  \item For $h>2$, $C_h$ is a hyperbola,
          $F_{a|C_h}$ is conjugated to a translation, and there are
          two fixed points of $\widetilde{F}_{a|\widetilde{C}_h}$ at
          infinity given by $[2:h\pm\sqrt{h^2-4}:0]$,  an attractor and a repeller.
  \item For $h=2$, $C_h$ is a parabola,
          $F_{a|C_h}$ is conjugated to a translation, and the point at
          the infinity
          $[1:1:0]$ is a global attractor of $\widetilde{F}_{a|\widetilde{C}_h}$.
  \item For $h_c<h<2$, $C_h$ is an ellipse,
          $F_{a|C_h}$ is conjugated to a rotation with  rotation number
         \begin{equation}\label{E_rotnumBR}
\theta(h)=\arg\Bigg(\frac{h-i\sqrt{4-h^2}}{2}\Bigg)\, (\mathrm{mod}
\, 2\pi).
\end{equation}
\end{enumerate}
\end{propo}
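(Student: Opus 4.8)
The plan is to apply the machinery of Proposition \ref{P_mainpropo} and Proposition \ref{P_dinamica-moeb} to the concrete conic fibration (\ref{E_foliacioBR}). First I would, for each fixed $h$ in the relevant range, produce an explicit proper rational parametrization $P_h$ of the projective conic $\widetilde{C}_h$. A standard way is to pick a rational point on $\widetilde{C}_h$ and sweep lines through it; a convenient choice is to use one of the points at infinity, since the conic $\{x^2+y^2-hxy-xz-yz+az^2=0\}$ meets the line $\{z=0\}$ at $[2:h\pm\sqrt{h^2-4}:0]$, which are real and distinct exactly when $h>2$, coincide (both equal $[1:1:0]$) when $h=2$, and are complex conjugate when $h<2$. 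This trichotomy is precisely the case split $(a),(b),(c)$ of the statement, and it is also what controls whether the associated M\"obius map $M_h=P_h^{-1}\circ\widetilde{F}_a\circ P_h$ is hyperbolic/parabolic (fixed points at infinity) or elliptic.

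Next I would compute $M_h(t)=(a(h)t+b(h))/(c(h)t+d(h))$ explicitly by conjugating $\widetilde{F}_a([x:y:z])=[xy:az^2-yz+y^2:xz]$ through $P_h$, and then read off the dynamical type from $\Delta(h)=(d-a)^2+4bc$ and $\xi(h)=(a+d+\sqrt{\Delta})/(a+d-\sqrt{\Delta})$ as in Proposition \ref{P_dinamica-moeb}. For $h>2$ the two real fixed points of $M_h$ correspond under $P_h$ to the two points at infinity $[2:h\pm\sqrt{h^2-4}:0]$; I would check $|\xi|\ne1$ there (equivalently $\Delta>0$ and $a+d$, $\sqrt{\Delta}$ real), so by case $(a_1)$ one is an attractor and the other a repeller of $\widetilde{F}_{a|\widetilde{C}_h}$, and on the affine part $C_h$ (a hyperbola, homeomorphic to two copies of $\R$) the map has no fixed point, hence by Corollary \ref{C_Lineals}(b) it is conjugate to a translation. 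For $h=2$ the single fixed point $[1:1:0]$ makes $\Delta=0$, giving by case $(b)$ a global attractor, and again a fixed-point-free translation on the affine parabola. For $h_c<h<2$ the fixed points of $M_h$ are a complex-conjugate pair, so $|\xi|=1$ and $M_h$ is conjugate to a rotation; evaluating $\xi(h)$ for the explicit coefficients should give $\xi=(h-i\sqrt{4-h^2})/(h+i\sqrt{4-h^2})$ up to sign/branch, whose argument is $\arg((h-i\sqrt{4-h^2})/2)\bmod 2\pi$, matching (\ref{E_rotnumBR}); then Corollary \ref{C_Lineals}(a) applied to the ellipse $C_h$ (homeomorphic to $\su$) gives conjugacy to a rotation with that rotation number.

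I would also record the minor bookkeeping: that the conic $C_h$ in $\R^{2,+}$ is a hyperbola/parabola/ellipse exactly in the stated ranges of $h$ (this is a direct discriminant computation on the quadratic form $x^2+y^2-hxy$, whose discriminant is $h^2-4$), that the asserted range $h\in(2-1/a,\infty)$ is $\mathrm{Im}(V_a)$ on $\R^{2,+}$ with $h_c=2-1/a$ attained only at the fixed point $(a,a)$, and that $(a,a)$ is elliptic because there $h_c<2$ when $a>1/4$. Finally, one should note that $P_h$ can be chosen with coefficients rational in the auxiliary algebraic quantity $\sqrt{h^2-4}$ (or complex-conjugate data for $h<2$), so the conjugacy $P_h$ and the computation of $M_h$ are honest.

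The main obstacle I expect is purely computational rather than conceptual: carrying out the conjugation $P_h^{-1}\circ\widetilde{F}_a\circ P_h$ cleanly enough that the coefficients $a(h),b(h),c(h),d(h)$ — and hence $\Delta(h)$ and $\xi(h)$ — come out in closed form and visibly produce the rotation number (\ref{E_rotnumBR}). A good device to keep this manageable is to normalize $P_h$ so that the two (possibly complex) points of $\widetilde{C}_h\cap\{z=0\}$ are the images of $t=0$ and $t=\infty$; then $M_h$ is automatically of the form $t\mapsto \lambda(h)\,t$ (in the elliptic case a rotation $t\mapsto e^{i\z(h)}t$ after a further real conjugacy), which makes the extraction of $\xi$ and of $\z(h)$ immediate and reduces the problem to identifying the single multiplier $\lambda(h)$.
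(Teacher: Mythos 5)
Your proposal follows essentially the same route as the paper: parametrize each conic rationally, conjugate $F_a$ to a M\"obius map $M_h$, and then read off the dynamics from Proposition \ref{P_dinamica-moeb} together with Corollary \ref{C_Lineals}; the only real difference is that the paper parametrizes by lines through a real \emph{affine} point of $C_h$ (which works uniformly for all $h>h_c$ and yields $M_h(t)=((h+1)t-1)/(t+1)$), whereas your preferred normalization through the points at infinity forces complex data in the elliptic range $h_c<h<2$ and then an extra real conjugacy. One concrete slip to fix before the argument closes: the quantity you guess, $\xi=(h-i\sqrt{4-h^2})/(h+i\sqrt{4-h^2})$, does \emph{not} have argument $\arg\bigl((h-i\sqrt{4-h^2})/2\bigr)$ (it has twice that argument); the actual computation from the paper's $M_h$ gives $\xi=(h+2-i\sqrt{4-h^2})/(h+2+i\sqrt{4-h^2})$, which simplifies exactly to $(h-i\sqrt{4-h^2})/2$ and hence to the stated rotation number, so the identity you need is there but not the one you wrote down. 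Also, on $\R^{2,+}$ the invariant set for $h>2$ is a single branch of the hyperbola (one copy of $\R$), not two, though this does not affect the application of Corollary \ref{C_Lineals}(b).
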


Using the rotation number function (\ref{E_rotnumBR}) we reobtain
the results of Bastien and Rogalski in \cite{BRcon}, concerning the
set of periods of each particular map $F_a$, and of the family of
maps.

\begin{propo}[\cite{BRcon}]\label{P_exBR-p2}
Set $a>1/4$, $h_c=2-1/a$, and
$\theta_a=\frac{1}{2\pi}\arg\left(\frac{2a-1-i\sqrt{4a-1}}{2a}\right).$
Then:
\begin{enumerate}[(a)]
\item For any fixed $a>1/4\,$ and any natural number $p\geq \mathrm{E}\left(1/(1
-\theta_a)\right)+1$ there exists $h_p\in(h_c,2)$ such that
$C_{h_p}$ is filled of $p$-periodic orbits.
\item For all $p\in\N$, $p\geqslant3$ there exists
$a>1/4$ and $h_p\in(h_c,2)$ such that $C_{h_p}$ is filled of
$p$-periodic orbits.
\end{enumerate}
        \end{propo}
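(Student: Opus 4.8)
The strategy is to combine the explicit rotation number function $\theta(h)$ from Proposition \ref{P_exBR-p1}(c) with a continuity/monotonicity analysis on the interval $(h_c,2)$, and then use the standard fact that a circle homeomorphism conjugate to a rotation by $p/q\in\Q$ (in lowest terms) has all its orbits $q$-periodic. First I would establish the behaviour of $\theta$ at the endpoints of $(h_c,2)$. As $h\to 2^-$ one has $\sqrt{4-h^2}\to 0^+$, so $(h-i\sqrt{4-h^2})/2\to 1$, and hence $\theta(h)\to 0$; moreover one should check that the approach is from above, i.e. $\theta(h)\to 0^+$, which follows from the sign of the imaginary part $-\sqrt{4-h^2}<0$ together with the chosen branch of $\arg$ (taking values, say, in $(0,2\pi)$ or reducing mod $2\pi$ to a small positive number). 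At the other endpoint $h\to h_c^+=2-1/a$ one gets $\theta(h_c)=\frac{1}{2\pi}\cdot 2\pi\,\theta_a$ with $\theta_a$ as defined in the statement — this is just the evaluation of (\ref{E_rotnumBR}) at the central energy level, which equals the rotation number of $DF_a$ at the elliptic fixed point $(a,a)$, and here the normalisation by $2\pi$ matches the statement's $\theta_a\in(0,1)$.

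Next I would argue that $\theta$ (normalised to take values in $(0,1)$, i.e. $\widehat{\theta}(h):=\theta(h)/(2\pi)$) is continuous and strictly monotone on $(h_c,2)$: continuity is clear from the formula, and strict monotonicity can be seen by writing $\widehat\theta(h)=\frac{1}{2\pi}\arccos(h/2)$ after noting $\cos(\arg((h-i\sqrt{4-h^2})/2))=h/2$ and $\sin<0$, so $\arg\in(-\pi,0)$ up to $2\pi$; thus $\widehat\theta$ is strictly decreasing from $\widehat\theta(h_c^+)=\theta_a$ down to $\widehat\theta(2^-)=0$. Therefore $\widehat\theta$ maps $(h_c,2)$ bijectively and continuously onto the interval $(0,\theta_a)$. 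For part (a): given $a>1/4$ and any integer $p\ge \mathrm{E}(1/(1-\theta_a))+1$, the number $1/p$ lies in $(0,\theta_a)$ precisely when $p>1/\theta_a$; but actually the relevant condition is subtler — we want a rational in $(0,\theta_a)$ with denominator exactly $p$ and whose reduced form still has denominator $p$. The cleanest choice is to take the rational number $(p-1)/p$ if we instead track $1-\widehat\theta$, or more simply: since $\widehat\theta$ is onto $(0,\theta_a)$ and $\theta_a>1-1/p$ fails in general, I would instead pick the rotation number $1/p\in(0,\theta_a)$, which is automatically in lowest terms, valid as soon as $1/p<\theta_a$, i.e. $p>1/\theta_a$; the hypothesis $p\ge \mathrm{E}(1/(1-\theta_a))+1$ should be reconciled with this — I expect the intended rotation number is $(p-1)/p = 1-1/p$ read through a decreasing $\widehat\theta$, so that the condition $1-1/p<\theta_a$ rearranges to $p< 1/(1-\theta_a)$... hence one must be careful: the correct reading is that the curves near $h_c$ carry rotation number close to $\theta_a$ and those near $h=2$ close to $0$, and one selects $h_p$ with $\widehat\theta(h_p)=1/p$ for all $p$ with $1/p<\theta_a$; the floor condition in the statement presumably encodes exactly "$p$ large enough that $1/p<\theta_a$", and I would verify $\mathrm{E}(1/(1-\theta_a))+1$ versus $\lceil 1/\theta_a\rceil$ by a direct elementary comparison. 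Having fixed such $h_p$, Proposition \ref{P_exBR-p1}(c) says $F_{a|C_{h_p}}$ is conjugate to the rigid rotation by $1/p$, hence every orbit on $C_{h_p}$ is $p$-periodic.

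For part (b), I would show that $\theta_a$, viewed as a function of $a$ on $(1/4,\infty)$, has range covering all of $(0,1/3)$ (or at least an interval whose closure contains $0$), so that for any given $p\ge 3$ one can first choose $a$ with $\theta_a>1/p$, and then apply part (a) to get the required $h_p\in(h_c,2)$. Concretely: as $a\to\infty$, $\frac{2a-1-i\sqrt{4a-1}}{2a}\to 1$, so $\theta_a\to 0$; as $a\to (1/4)^+$, $2a-1\to -1/2<0$ while $\sqrt{4a-1}\to 0^+$, so the argument of the number tends to the negative real axis, giving $\theta_a\to 1/2$ (i.e. rotation number $1/2$ in the limit, matching the fact that at $a=1/4$ the fixed point is parabolic/degenerate with eigenvalue $-1$). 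By continuity $\theta_a$ takes every value in $(0,1/2)$, in particular values exceeding $1/p$ for every $p\ge 3$, so the reduction to (a) goes through. The main obstacle I anticipate is the careful bookkeeping of branches of $\arg$ and of the monotonicity direction of $\widehat\theta$ — in particular making the inequality in the hypothesis of (a) match the true threshold "$1/p$ (or $(p-1)/p$) lies in the range of $\widehat\theta$" — rather than anything deep; once the rotation number function is pinned down as $\frac{1}{2\pi}\arccos(h/2)$ everything else is the textbook correspondence between rational rotation numbers and periodic orbits of circle maps conjugate to rotations.
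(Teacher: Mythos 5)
Your overall strategy is the same as the paper's: use the explicit rotation number from Proposition \ref{P_exBR-p1}(c), establish monotonicity and the endpoint limits on $(h_c,2)$, apply the intermediate value theorem, and then decide which irreducible fractions lie in the range. However, the entire content of part (a) is the bookkeeping you explicitly leave unresolved, and as written your version of it is wrong. The number $\frac{2a-1-i\sqrt{4a-1}}{2a}$ lies on the unit circle in the \emph{lower} half-plane, so with $\arg$ taken in $[0,2\pi)$ one has $\theta_a\in(1/2,1)$; this is forced, since with $\arg\in(-\pi,\pi]$ one would get $\theta_a<0$ and $\mathrm{E}\left(1/(1-\theta_a)\right)+1=1$, making the statement false. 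With that convention $\theta(h)$ \emph{increases} from $\theta_a$ to $1$ on $(h_c,2)$ (your $\frac{1}{2\pi}\arccos(h/2)$ is $1-\theta(h)$, not $\theta(h)$), so the attained rotation numbers form the interval $(\theta_a,1)$, not $(0,\theta_a)$. The correct combinatorial step is then: any irreducible fraction $q/p\in(\theta_a,1)$ has $1\le q\le p-1$, hence $(p-1)/p\ge q/p$ also lies in $(\theta_a,1)$; since $(p-1)/p$ is always irreducible, period $p$ is realized exactly when $(p-1)/p>\theta_a$, i.e.\ $p>1/(1-\theta_a)$, i.e.\ $p\ge \mathrm{E}\left(1/(1-\theta_a)\right)+1$. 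Your attempted reconciliation reverses this inequality (you write that $1-1/p<\theta_a$ rearranges to $p<1/(1-\theta_a)$, which is the condition for $(p-1)/p$ \emph{not} to be in the range) and then defers the verification; since matching the stated threshold is precisely what part (a) asserts, this is a genuine gap rather than a cosmetic one.

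Part (b) has the mirror-image problem: the correct computation shows $\theta_a$ sweeps $(1/2,1)$ as $a$ ranges over $(1/4,\infty)$, so $\bigcup_{a>1/4}(\theta_a,1)=(1/2,1)$ and one uses $(p-1)/p\in(1/2,1)$ for all $p\ge 3$. Your conjugate version (range $(0,1/2)$, fractions $1/p$) would deliver the same conclusion if carried out consistently, because replacing the rotation number $\theta$ by $1-\theta$ does not change the set of periods; but you cannot mix the two conventions, which is what happens when you pair the range $(0,\theta_a)$ with the statement's $\theta_a$. Fixing the proof requires only committing to one branch of $\arg$, checking the monotonicity direction against it, and running the irreducible-fraction argument with the corresponding extreme fraction ($(p-1)/p$ for the range $(\theta_a,1)$, or $1/p$ for the range $(0,1-\theta_a)$).
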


Prior to prove the above results we find a real proper
parametrization the curves (\ref{E_foliacioBR}). In our case, we
will use the method of \emph{parametrization by lines} \cite[Section
4.6]{SWPD}, to obtain the proper parametrization. First, for each
curve in (\ref{E_foliacioBR}), we consider the point
$$(x_0,y_0)=\Bigg(a,\frac{a h+1+\delta}{2}\Bigg)\in C_h,$$
where $\delta=\sqrt{(ah+1)^2-4a^2}\in\R$. Taking the new variables
$x=u+x_0,y=v+y_0$ we bring this point to the origin, so that, each
curve in the new variables is defined by $f_1(u,v)+f_2(u,v)=0$ where
$f_k$ stands for the homogeneous part of degree $k$. In our case:
\begin{align*}
f_1(u,v)&=\left(- a{h}^{2}/2- \left( 1+\delta \right) h/2-1+2
a\right)u+\delta v,\\
f_2(u,v)&=u^2+v^2-h  u v.
\end{align*}
We compute the intersection points of these curves  with the the
lines $v=t\,u$,
\begin{displaymath}
 \left\{ \begin{array}{l}
 v=t u,\\
 f_2(u,v)+f_1(u,v)=0,
  \end{array} \right.
\end{displaymath}
obtaining an affine parametrization $(u(t),v(t))$, so that the
parametrization of the corresponding curve $C_h$ is the \emph{real}
one given by $P_h(t)=(P_{1,h}(t),P_{2,h}(t))=(u(t)+x_0,v(t)+y_0)$
where
         $${P_{1,h}(t)=\frac{2\delta t-ah^2-(1+\delta)h-2+4a}{2(-t^2+ht-1)}+a,}$$
         $${P_{2,h}(t)=\frac{\left( -ah+\delta-1 \right) {t}^{2}+ \left( 4\,a-2 \right) t-ah-
\delta-1}{2(-t^2+ht-1)}.}$$

It is straightforward to check (see Theorem \ref{T_proper-or-not} in
the Appendix) that for each $h>h_c$, the above parameterizations of
$C_h$ is a proper one. Furthermore, by using the method described in
Theorem \ref{T_inversa-param} and Equation
(\ref{E_inversa-general}), one gets that its inverse is given by
         $${{P_h}^{-1}(x,y)=
{\frac {-2\,\delta\,x+ \left( a{h}^{2}+(\delta+1)h-4\,a+2 \right)
y-ah+2 \,a+\delta-1}{ \left( a{h}^{2}+(1-\delta)h-4\,a+2 \right)
x+2\,\delta \,y+ah-2\,a-\delta+1}}.}$$

The parametrization $P_h(t)$ is defined in $\widehat{\R}$ if
$h_c<h<2$, and in $\widehat{\R}\setminus\{t_0,t_1\}$ where
\begin{equation}\label{E_puntstiBR}
t_{j}:=\frac{h+(-1)^j\sqrt{h^2-4}}{2},
\end{equation}
if $h\geq 2$.
 Observe
that this values of the parameters do not correspond to affine
points of $C_h$. Indeed, each curve extends to $\R P^2$ as
$\widetilde{C}_h=\{[x:y:t],\,x^2+y^2-h x y-xt-yt+at^2=0\}$, which is
properly parameterized for $t\in\widehat{\R}$ by
$$\begin{array}{rl}
\widetilde{P}_h(t)=&\left[-2 a{t}^{2}+ \left( 2 ah+2 \delta \right)
t-a{h}^{2}-h\delta+2 a-h -2\right. :\\
&\left.\left( -ah+\delta-1 \right) {t}^{2}+ \left( 4 a-2 \right)
t-ah- \delta-1:2(-t^2+ht-1)\right].
  \end{array}
$$
The values of the parameters $t_j$ correspond with the infinite
points $$Q_{j,h}=[2: h+(-1)^j\sqrt{h^2-4}: 0]$$ which are fixed
points of the extension of $F$  to $\R P^2$ given by
$\widetilde{F}_a([x:y:z])=[xy:az^2-yz+y^2:xz]$. Of course,  if
$h=2$,  then $t_1=t_0=1$ and $Q_{1,h}=Q_{0,h}=[1:1:0]$.

As an immediate consequence of the above arguments, Theorem
\ref{T_Liesym} and Corollary \ref{C_Mesura}, we have the following
result:

\begin{propo}\label{P_SL-BR}
Each map (\ref{E_BR}) possesses the Lie symmetry
\begin{equation}\label{E-LS-BR}
X(x,y)=\frac{x^2-y^2+a-x}{y}
          \frac{\partial}{\partial x}+\frac{x^2-y^2-a+y}{x}\frac{\partial}{\partial
          y},\end{equation} and preserves a measure absolutely
continuous with respect the Lebesgue one in $\R^{2,+}$ given by
$$m(\mathcal{B})=\int_\mathcal{B} \dfrac{1}{x\,y}\,dx\,dy$$ for any
Lebesgue measurable set $\mathcal{B}$ of $\R^{2,+}$.
\end{propo}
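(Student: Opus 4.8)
The plan is to obtain the vector field from Theorem \ref{T_Liesym} and then the measure from Corollary \ref{C_Mesura}. Following the constructive route, I would first compute the associated M\"obius map $M_h=P_h^{-1}\circ F_a\circ P_h$ using the proper parametrization $P_h$ and its inverse $P_h^{-1}$ displayed above: since $P_h$ is proper and $F_{a|C_h}$ is birational on $C_h$, the composition $M_h$ is a M\"obius transformation $M_h(t)=(a(h)t+b(h))/(c(h)t+d(h))$, and a direct substitution produces the coefficient functions $a(h),b(h),c(h),d(h)$ (rational in $h$ and in the auxiliary quantity $\delta=\sqrt{(ah+1)^2-4a^2}$). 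Plugging these into $Y_h(t)=\left(-b(h)+(d(h)-a(h))t+c(h)t^2\right)\partial/\partial t$ and then into formula (\ref{E_Liesym}), $X(x,y)=DP_h(P_h^{-1}(x,y))\cdot Y_h(P_h^{-1}(x,y))\big|_{h=V_a(x,y)}$, yields a vector field on $\R^{2,+}$; after the substitution $h=V_a(x,y)$, which should make the $\delta$'s cancel, the simplification is expected to collapse to precisely (\ref{E-LS-BR}).

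A shorter alternative for the writeup — in the spirit of the proof of Lemma \ref{L_LiesymMoebiu} — is to simply verify a posteriori that the field $X$ in (\ref{E-LS-BR}) is a Lie symmetry of $F_a$: one checks that $V_a$ is a first integral of $X$ (equivalently, that $X$ is a scalar multiple of the Hamiltonian field of $V_a$) and that $X$ satisfies the compatibility equation (\ref{E_Lie-sym-char}), $X(F_a(p))=DF_a(p)\,X(p)$; both amount to routine rational identities on $\R^{2,+}$. In either case, once $X$ is in hand, I write $X=\mu\cdot\left(-\partial_y V_a\,\partial/\partial x+\partial_x V_a\,\partial/\partial y\right)$ as in (\ref{E-hamiltonian}). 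Computing $\partial_x V_a=(x^2-y^2+y-a)/(x^2y)$ and $\partial_y V_a=(y^2-x^2+x-a)/(xy^2)$ and comparing componentwise with (\ref{E-LS-BR}) gives $\mu(x,y)=xy$.

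For the invariant measure I would invoke Corollary \ref{C_Mesura}. Since $\mu(x,y)=xy>0$ on $\R^{2,+}$, the set $\U^+$ of that corollary is all of $\R^{2,+}$ and $\U^-=\emptyset$. Moreover $\det DF_a(x,y)=(y^2-y+a)/x^2$, which is strictly positive on $\R^{2,+}$ because $a>1/4$ forces $y^2-y+a\geq a-1/4>0$; hence $F_a$ preserves orientation on $\R^{2,+}$ and part (a) of Corollary \ref{C_Mesura} applies, giving a preserved measure with density $\nu_+=1/\mu=1/(xy)$, i.e.\ $m(\mathcal{B})=\int_{\mathcal{B}}\frac{1}{xy}\,dx\,dy$ as claimed. (Equivalently, one verifies directly via the change of variables formula that $\mu(F_a(x,y))=\det(DF_a(x,y))\,\mu(x,y)$, which indeed reduces to the identity $y(a-y+y^2)/x=(a-y+y^2)/x^2\cdot xy$.)

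I expect the main obstacle to be the explicit computation and simplification in the constructive route: carrying the square root $\delta$ through $DP_h$, $Y_h$ and $P_h^{-1}$, and checking that after the substitution $h=V_a(x,y)$ everything collapses to the rational expression (\ref{E-LS-BR}), is where the genuine bookkeeping lies. If one instead takes the a posteriori verification as the argument, the only real content is the compatibility identity, and the orientation and measure steps are then immediate.
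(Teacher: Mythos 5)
Your proposal is correct and follows essentially the same route as the paper: compute $M_h=P_h^{-1}\circ F_a\circ P_h$ (which the paper finds to be $((h+1)t-1)/(t+1)$, the radical $\delta$ cancelling already at this stage), apply Lemma \ref{L_LiesymMoebiu} and formula (\ref{E_Liesym}) to get (\ref{E-LS-BR}), identify $\mu(x,y)=xy$ in (\ref{E-hamiltonian}), and invoke Corollary \ref{C_Mesura}. Your explicit verification that $\det DF_a=(y^2-y+a)/x^2>0$ on $\R^{2,+}$, so that part (a) of Corollary \ref{C_Mesura} applies with $\U^+=\R^{2,+}$, is a point the paper leaves implicit and is a welcome addition.
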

\begin{proof}
From the above arguments,  we have that
$\{P_h(t)\}_{h\in\mathcal{H}}$, where $\mathcal{H}:=\{h>h_c\}$, is a
family of proper affine parameterizations of
$\{C_h\}_{h\in\mathcal{H}}$. Now a computation shows that
\begin{equation}\label{E_Moeb-BR}
M_h(t)=P_h^{-1}\circ F_a \circ P_h(t)=\frac{(h+1)t-1}{t+1},
\end{equation}
so $F_{|C_h}$ is conjugate to  $M_{h|\widehat{\R}}$  if $h_c<h<2$,
and $M_{h|\widehat{\R}\setminus\{t_0,t_1\}}$ otherwise. By Lemma
\ref{L_LiesymMoebiu}, each map $M_h$ has the Lie symmetry
$Y_h(t)=\left(1-ht+{t}^{2}\right) \dfrac{\partial}{\partial t}$.
Observe that, by setting $P_h(t)=(P_{1,h}(t),P_{2,h}(t))$, then
Equation (\ref{E_Liesym}) in Theorem \ref{T_Liesym} gives the vector
field $X=X_1\partial/\partial x +X_2\partial/\partial y$ where :
$$\begin{array}{l}
X_1(x,y)=\left.P_{1,h}'(P_h^{-1}(x,y))Y_h(P_h^{-1}(x,y))\right|_{h=V(x,y)}=(x^2-y^2+a-x)/y \\
X_2(x,y)=\left. P_{2,h}'(P_h^{-1}(x,y))Y_h(P_h^{-1}(x,y))
\right|_{h=V(x,y)}=(x^2-y^2-a+y)/x
  \end{array}
$$
Hence,  we get that $F_a$ has the Lie symmetry (\ref{E-LS-BR}), also
defined in $\R^{2,+}$.

Observe that using the notation in Section \ref{Ss-Lie-i-mesura},
this vector is exactly the multiple of the hamiltonian vector field
(\ref{E-hamiltonian}) associated to $V_a$, with $\mu(x,y)=xy$. So by
Corollary \ref{C_Mesura} it defines a measure absolutely continuous
with respect the Lebesgue one in $\R^{2,+}$ given by
$m(\mathcal{B})=\int_\mathcal{B} (1/\mu)\,dx\,dy$, for any Lebesgue
measurable set $\mathcal{B}$ of $\R^{2,+}$.
\end{proof}

\begin{proof}[Proof of Proposition \ref{P_exBR-p1}]
It is straightforward to check that for any fixed $a>1/4$, the
curves $C_h$ are ellipses for $h_c<h<2$, a parabola when $h=2$, and
a branch of a hyperbola when $2<h<\infty$, see \cite{BRcon}. By
using Corollary \ref{C_Lineals} and the existence of the Lie
symmetry (\ref{E-LS-BR}), we get that the affine dynamics of each
map $F_{a|C_h}$ is conjugate to a translation when $h\geq 2$, and
conjugate to a rotation when $h_c<h<2$.

As shown above, each map $\widetilde{F}_{a|C_h}$ is conjugate to the
M\"obius one $M_h$ given in (\ref{E_Moeb-BR}). Observe that for each
$h\geq 2$, the  map $M_h(t)$ has the fixed points $t_j$ given in
(\ref{E_puntstiBR}) which correspond with the infinite points
$Q_{j,h}$, which are fixed points of $\widetilde{F}_a$.

Following the notation of Proposition \ref{P_dinamica-moeb}, for the
map $M_h$ one gets $\Delta=h^2-4$ and
$\xi=(h+2-\sqrt{h^2-4})/(h+2+\sqrt{h^2-4})$. Now,
\begin{itemize}
 \item If $h>2$, then $\Delta>0$ and $\xi<1$. In this case $t_0$ is an attractor of $M_h$
 in $\widehat{\R}\setminus t_1$ and $t_1$ is a repeller, so $Q_{0,h}$ is an attractor of $\widetilde{F}_{a|\widetilde{C}_h}$
 in $\widetilde{C}_h\setminus Q_{1,h}$, and $Q_{1,h}$ a repeller.
 \item If $h=2$, then  $\Delta=0$, so the the point $t_0=h/2=1$ is
 a global attractor of $M_2$ in $\widehat{\R}$, thus $Q_{0,2}=[1:1:0]$ is a global
 attractor of $\widetilde{F}_{a|\widetilde{C}_2}$
 in $\widetilde{C}_2$.
\item If $h_c<h<2$, then $\Delta<0$, hence $M_h$ is conjugated
to a rotation in $\widehat{\R}$, with rotation number
$$
\theta(h)=\arg\left(\frac{h+2-i\sqrt{4-h^2}}{h+2+i\sqrt{4-h^2}}\right)\,(\mathrm{mod}\,
2\pi)=\arg\Bigg(\frac{h-i\sqrt{4-h^2}}{2}\Bigg)\,(\mathrm{mod}\,2\pi),
$$ and therefore $F_{a|C_h}$ is a conjugate to a rotation with the same rotation number
(recall that, in this case,  the curves $C_h$ are affine ellipses
with no points at the infinity line).
\end{itemize}~\end{proof}

\begin{proof}[Proof of Proposition \ref{P_exBR-p2}]
(a) In the proof of Proposition \ref{P_exBR-p1} we have seen that if
$h_c<h<2$, then each map $F_{a|C_h}$ is conjugate to a rotation with
rotation number given by Equation (\ref{E_rotnumBR}). It is
straightforward to check that for a fixed $a>1/4$ this function
$\theta(h)$ grows monotonically from $\theta_a$ to $1$ for
$h\in(h_c,2)$, where
$$\theta_a= \lim\limits_{h\to h_c^+}\theta(h)=\frac{1}{2\pi}
\arg\left(\frac{h_c-i\sqrt{4-h_c^2}}{2}\right)=\frac{1}{2\pi}
\arg\left(\frac{2a-1-i\sqrt{4a-1}}{2a}\right).$$  Therefore, for all
$\theta\in (\theta_a,1)$ there exists $h\in(h_c,2)$ such that
$\theta(h)=\theta$, so to characterize the set of periods of any map
$F_a$ in $\R^{2,+}$, we need to know which are the irreducible
fractions $q/p\in (\theta_a,1)$.

Observe that if an irreducible fraction $q/p\in (\theta_a,1)$, then
$1\leq q\leq p-1$ and therefore  $(p-1)/p\in (\theta_a,1)$, because
$$
\theta_a<\frac{q}{p}\leq \frac{p-1}{p}<1.
$$
Hence we only need to characterize which are integer numbers $p$
such that $\theta_a<(p-1)/p$. For such a number, one easily obtains
$p>1/(1-\theta_a)$, hence
$$
p\geq E\left(\frac{1}{1-\theta_a}\right)+1.
$$

(b) A computation shows that when  $1/4<a\leq 1/2$, $h_c$ varies
monotonically from $-2$, to $0$, and therefore $2\pi\theta_a$ is an
angle in the third quadrant that varies from $\pi$ to $3\pi/4$. If
$a>1/2$ then $h_c>0$ and $2\pi\theta_a$ is an angle of the fourth
quadrant. Hence
$$
I:=\bigcup\limits_{a>1/4} (\theta_a,1)=\left(\frac{1}{2},1\right).
$$
Observe that $1/p\in I$ for all $p\geq 3$,  and therefore the result
follows.~\end{proof}

\subsection{The Saito and Saitoh map}\label{S_SS}

In \cite{SS}, S.~Saito and N.~Saitoh considered the  map
\begin{equation}\label{Fsaito}
F(x,y)=\left(x y,\frac{y(1+x)}{1+x y}\right),
\end{equation}
defined in the open set $\mathcal{G}(F)=\C^2
\setminus\{\bigcup_{n\geq 0} F^{-n}(x,{-1}/{x})\}$, and they found a
continuum of $3$-periodic orbits. Here we describe the global
dynamics of $F$, and we notice that there are continua of periodic
orbits of minimal period $p$ for all $p\geq 2$, which have the
simple expression $y(1+x)=h_p$, being $h_p$ any primitive $p$-root
of the unity.

As already noticed in \cite{SS}, this map has the first integral
$V(x,y)=y(1+x)$, hence the affine sets $\gamma_h:=\{y(1+x)=h,\,
h\in\C\}\cap \mathcal{G}(F)$ are invariant. The extension to $\C
P^2$ of all these sets contains the affine points of $\gamma_h$ plus
the infinity points $[1:0:0]$ and $[0:1:0]$. But these infinity
points belong to the so called \emph{indeterminacy locus}
 of  the extension of $F$ to $\C P^2$,  $\widetilde{F}([x:y:z])=[xy(xy+z^2): yz^2(x+z): (xy+z^2)z^2]$.  That is:
$\widetilde{F}([1:0:0])=\widetilde{F}([0:1:0])=[0:0:0]$, see
\cite{Diller,DF} for further details. This is the reason why we will
describe the dynamics of $F$ only in the affine space $\C^2$. The
global dynamics is given by next result.

\begin{propo}\label{proposaitocomplexa} Consider the map (\ref{Fsaito}),
then:
\begin{enumerate}
   \item[(a)] Any solution with initial condition on
   $\gamma_0$ reaches the point $(0,0)$, which is a global attractor of $F_{|\gamma_0}$  in finite time.
   \item[(b)] For $h\neq 1$, on any invariant set
    $\gamma_h$ there are two fixed points of $F$, given by $Q_0=(0,h)$ and $Q_1=(h-1,1)$. Furthermore,
               \begin{enumerate}
               \item[(i)] If $|h|<1$, then the point $Q_0$ is the attractor of $F_{|\gamma_h}$ in
               $\gamma_h\setminus
               Q_1$, and $Q_{1}$ is a repeller. Conversely if
               $|h|>1$, then
               the point $Q_0$ is the repeller and $Q_{1}$ an attractor of $F_{|\gamma_h}$ in
               $\gamma_h\setminus
               Q_0$.
               \item[(ii)] If $|h|=1$, then either any initial condition $(x_0,y_0)$ in $\gamma_h$
               gives rise to a periodic orbit
               with minimal period $p\geq 2$, when $h$ is a primitive $p$-root of the unity,
               or it gives rise to an orbit which densely fills the
               set $\gamma_h \cap \{W(x,y)=W(x_0,y_0)\}$, where
               $W(x,y)=\left|x/(x-h+1)\right|$, otherwise.
               \end{enumerate}

    \item[(c)] For $h=1$,  the point $p=(0,h)$ is the unique fixed point of $F$ in $\gamma_1$,
    which is a global attractor of $F_{|\gamma_1}$.
  \end{enumerate}
\end{propo}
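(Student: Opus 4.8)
The plan is to follow the scheme of Section~\ref{S_Main}: parametrize the invariant fibres, pass through Proposition~\ref{P_mainpropo} to the associated one-dimensional M\"obius maps, and then quote Proposition~\ref{P_dinamica-moeb} for their dynamics. One fibre must be set apart, namely $\gamma_0=\{y(1+x)=0\}$: its defining polynomial $xy+y$ is \emph{reducible}, so the genus~$0$ framework does not apply there; this case will be handled directly and gives part~(a).

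For $h\neq 0$ the curve $\gamma_h=\{xy+y-h=0\}$ is irreducible and of genus~$0$ (it is linear in $x$), and a proper parametrization together with its inverse is
\[
P_h(t)=\Bigl(t,\ \frac{h}{t+1}\Bigr),\qquad P_h^{-1}(x,y)=x .
\]
A direct substitution then gives the associated M\"obius map
\[
M_h(t)=P_h^{-1}\circ F\circ P_h(t)=\frac{h\,t}{t+1},
\]
so in the notation of Proposition~\ref{P_dinamica-moeb} one has $(a,b,c,d)=(h,0,1,1)$. The fixed-point equation $M_h(t)=t$ factors as $t(t+1-h)=0$, giving $t=0\leftrightarrow P_h(0)=(0,h)=Q_0$ and $t=h-1\leftrightarrow P_h(h-1)=(h-1,1)=Q_1$; moreover $\Delta=(d-a)^2+4bc=(h-1)^2$, $\xi=h$ (for a suitable choice of square root), and $M_h'(t)=h/(t+1)^2$, so the multipliers at $Q_0$ and $Q_1$ are $h$ and $1/h$ respectively.

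The statement then follows case by case. If $h\neq 0,1$ then $\Delta\neq 0$, so Proposition~\ref{P_dinamica-moeb}(a) applies: when $|h|\neq 1$ exactly one fixed point is an attractor of $M_h$ in $\widehat{\C}$ minus the other, and the multiplier computation identifies it as $Q_0$ for $|h|<1$ and as $Q_1$ for $|h|>1$, the other being a repeller --- this is~(b)(i); when $|h|=1$ (and $h\neq 1$) the same proposition gives that $F_{|\gamma_h}$ is conjugate to a rotation with rotation number $\arg(h)$, hence $p$-periodic of minimal period $p\geq 2$ precisely when $h$ is a primitive $p$-th root of unity, and otherwise with every orbit densely filling a level curve of the first integral $W(t)=|t/(t-h+1)|$, which pulled back by $P_h^{-1}(x,y)=x$ equals $W(x,y)=|x/(x-h+1)|$ --- this is~(b)(ii), the bound $p\geq 2$ being forced because $h=1$ is excluded. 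If $h=1$ then $\Delta=0$ and $t=0$, i.e. $(0,1)$, is the unique (double) fixed point, a global attractor of $M_1$ in $\widehat{\C}$ by Proposition~\ref{P_dinamica-moeb}(b), which is~(c). Finally, for $h=0$ one checks directly that $\gamma_0\cap\mathcal{G}(F)\subset\{y=0\}\cup\{x=-1\}$, that $F(x,0)=(0,0)$, $F(-1,y)=(-y,0)$ and $F(0,0)=(0,0)$, so every orbit reaches $(0,0)$ in at most two steps --- this is~(a).

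The delicate point is transporting the dynamics of $M_h$ on $\widehat{\C}$ back to that of $F$ on the \emph{affine} part of $\gamma_h$ intersected with $\mathcal{G}(F)$. One has to identify this affine fibre with $\widehat{\C}\setminus\{-1,\infty\}$ through $P_h$, check that the points deleted in the definition of $\mathcal{G}(F)$ correspond exactly to the $M_h$-orbit of $t=-1/(h+1)$ --- the intersection of $\gamma_h$ with $\{xy=-1\}$ --- and recall that the two points at infinity $[1:0:0]$ and $[0:1:0]$ belong to the indeterminacy locus of $\widetilde F$ (as noted just before the statement), so the projective-extension argument behind Proposition~\ref{P_mainpropo} must be used with those points removed. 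Since only a countable subset is excised from each $\gamma_h$, and it never contains the basin of the relevant attractor (the repeller has empty stable set), the qualitative conclusions --- global attractor, minimal period, density on a level curve --- carry over unchanged. I expect this bookkeeping, rather than any individual computation, to be the main obstacle.
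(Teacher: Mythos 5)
Your proposal is correct and follows essentially the same route as the paper: the same trivial proper parametrization $P_h(t)=(t,h/(t+1))$ with $P_h^{-1}(x,y)=x$, the same associated M\"obius map $M_h(t)=ht/(t+1)$ with fixed points $t=0$ and $t=h-1$, and the same appeal to Proposition~\ref{P_dinamica-moeb}, with the reducible fibre $\gamma_0$ handled by direct iteration. The only cosmetic difference is your normalization $\xi=h$ versus the paper's $\xi=1/h$ (the two branches of $\sqrt{\Delta}$), which does not affect any conclusion.
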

\begin{proof} The set $\gamma_0$ is given by the lines $x=-1$ and $y=0$.
  Observe that $(0,0)\in \gamma_0$ is a fixed point of $F$. A simple
  computation shows that $F^2(-1,y)=(0,0)$ and $F(x,0)=(0,0)$ for
  all $x,y\in \C$, which proves statement (a).

To prove statements (b) and (c) we fix  $h\neq 0$. Observe that the
set $\gamma_h$ is given
  by the intersection of the curve $C_h=\{y(1+x)=h\}$  with
  $\mathcal{G}$. Any curve $C_h$  admits
  the trivial proper affine parametrization
  $$P_h(t)=\left(t,\frac{h}{t+1}\right)\mbox{ for } t\neq -1.$$ Observe that
  parameter $t=-1$ correspond with the excluded infinity point $[0:1:0]$. Observe that $P_h^{-1}(x,y)=x$. A
  computation gives.
 $$M_h(t)=P_h^{-1}\circ F \circ P_h(t)=\frac{ht}{t+1}.$$

It is easy to check that $\gamma_h=C_h \setminus \{\bigcup_{n\geq 0}
F^{-n}(-1/(h+1),h+1)\}$ for $h\neq -1$; that $\gamma_{-1}=C_{-1}$;
and that  for any $h\in\C$, the map $F_{|\gamma_h}$ is conjugate to
the  $M_{h|\mathcal{G}(M_h)}$, where $\mathcal{G}(M_h)=\C\setminus
\{\cup_{n\geq 0} M_h^{-n}(-1)\}$  is the good set of $M_h$ in $\C$.
In particular $\mathcal{G}(M_{-1})=\C \setminus \{-1\}$.

Each map $M_h$ has two fixed points $t_0=0$ and $t_1=h-1$
corresponding to the parameters of the points $Q_0,Q_1\in \C^2$
respectively. Now, the result follows directly from Proposition
\ref{P_dinamica-moeb}. In particular,  for $|h|=1$ with $h\neq 1$,
and since $\Delta=(1-h)^2$ and $\xi=1/h$, one easily gets that the
map $M_h$ is conjugate to a rotation in $\C$ with rotation number
$$
\theta(h)=\arg\left(\frac{1}{h}\right)\,(\mbox{mod }
2\pi)=\arg\left(\bar{h}\right)\,(\mbox{mod } 2\pi).
$$
Hence the map $F_{|\gamma_h}$ has continua of periodic orbits of all
minimal periods $p\geq 2$, located at the hyperbolae  $y(1+x)=h_p$,
being $h_p$  a primitive $p$-root of the unity. Furthermore,  in
this case
$$W(x,y)=\left| \dfrac{x}{x-h+1}\right|,$$ is a first integral of
$F_{|\gamma_h}$. Otherwise, if $h$ is not a $p$-root of the unity,
then for each initial condition on $\gamma_h$, the associated orbit
of $F_{|\gamma_h}$ fills densely the set $\gamma_h \cap
\{W=W(x_0,y_0)\}$.
\end{proof}

Notice that, as a consequence of the above result, the only periodic
orbits that can be observed in $\R^2$ are the $2$-periodic orbits
that are located in the real hyperbola $y(1+x)=-1$.

Finally, as a direct consequence of Theorem \ref{T_Liesym}, we have:

\begin{corol}\label{C_SL-Saito-Saitoh}
The map (\ref{Fsaito}) possesses the Lie symmetry
$$
X(x,y)=-x\left(1+x\right)\left( y-1 \right)\frac{\partial}{\partial
x}+ xy\left( y-1 \right)  \frac{\partial}{\partial y}.$$
\end{corol}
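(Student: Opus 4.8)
The plan is to apply Theorem~\ref{T_Liesym} directly to the map (\ref{Fsaito}), using the explicit proper parametrization and associated M\"obius map already computed in the proof of Proposition~\ref{proposaitocomplexa}. From that proof we have, for each $h\neq 0$, the proper affine parametrization $P_h(t)=(t,h/(t+1))$ with inverse $P_h^{-1}(x,y)=x$, and the conjugate M\"obius map
$$
M_h(t)=P_h^{-1}\circ F\circ P_h(t)=\frac{ht}{t+1}.
$$
So in the notation of Theorem~\ref{T_Liesym} we read off $a(h)=h$, $b(h)=0$, $c(h)=1$, $d(h)=1$. By Lemma~\ref{L_LiesymMoebiu} (equivalently Equation~(\ref{E_LiesymMoeb})), the corresponding Lie symmetry of $M_h$ is
$$
Y_h(t)=\left(-b(h)+(d(h)-a(h))t+c(h)t^2\right)\frac{\partial}{\partial t}=\left((1-h)t+t^2\right)\frac{\partial}{\partial t}.
$$

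Next I would push this vector field forward through $DP_h$ as prescribed by Equation~(\ref{E_Liesym}). Writing $P_h(t)=(P_{1,h}(t),P_{2,h}(t))=(t,h/(t+1))$, we have $P_{1,h}'(t)=1$ and $P_{2,h}'(t)=-h/(t+1)^2$, so
$$
X(x,y)=\left.\Big(P_{1,h}'(t)\,Y_h(t),\;P_{2,h}'(t)\,Y_h(t)\Big)\right|_{t=P_h^{-1}(x,y),\,h=V(x,y)}
$$
with $V(x,y)=y(1+x)$. Substituting $t=x$ and $h=y(1+x)$: the first component is $(1-h)t+t^2=(1-y(1+x))x+x^2 = x\bigl(1-y(1+x)+x\bigr)=x(1+x)(1-y)$, which gives $X_1(x,y)=-x(1+x)(y-1)$. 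For the second component, $P_{2,h}'(x)=-h/(x+1)^2=-y(1+x)/(x+1)^2=-y/(x+1)$, and multiplying by $(1-h)x+x^2=x(1+x)(1-y)$ yields $X_2(x,y)=-\dfrac{y}{x+1}\cdot x(1+x)(1-y)=xy(y-1)$. This is exactly the claimed vector field.

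The argument is essentially a substitution, so there is no real obstacle; the only point requiring a touch of care is the algebraic simplification $(1-h)x+x^2 = x(1+x)(1-y)$ after the substitution $h=y(1+x)$, and the cancellation of the $(x+1)$ factor in the second component, which is what makes the final expression polynomial rather than merely rational. I would also remark that $X$ is defined on all of $\C^2$ (indeed it is polynomial), that $V(x,y)=y(1+x)$ is a first integral of $X$ by construction, and that $X$ vanishes exactly at the fixed points and invariant lines identified in Proposition~\ref{proposaitocomplexa}, consistent with the dynamical description there. The proof then reduces to: \emph{By Theorem~\ref{T_Liesym} applied to the parametrization $P_h(t)=(t,h/(t+1))$ and the M\"obius map $M_h(t)=ht/(t+1)$ from the proof of Proposition~\ref{proposaitocomplexa}, the map (\ref{Fsaito}) has the Lie symmetry obtained by the substitution above, which after simplification is the stated vector field $X$.}\fidemo
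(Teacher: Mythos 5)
Your proposal is correct and follows exactly the paper's own route: apply Theorem \ref{T_Liesym} with the parametrization $P_h(t)=(t,h/(t+1))$ and the M\"obius map $M_h(t)=ht/(t+1)$ from Proposition \ref{proposaitocomplexa}, obtain $Y_h(t)=((1-h)t+t^2)\,\partial/\partial t$ from Lemma \ref{L_LiesymMoebiu}, and substitute $t=x$, $h=y(1+x)$. The explicit algebraic simplifications you carry out are accurate and merely make explicit what the paper leaves as a one-line computation.
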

\begin{proof}
By Lemma \ref{L_LiesymMoebiu}, the M\"obius map $M_h(t)=ht/(t+1)$
has the Lie symmetry $Y_h(t)=\left((1-h)t+t^2\right)
\dfrac{\partial}{\partial t}$. Now the Lie symmetry of $F$ is
obtained from Equation (\ref{E_Liesym}), by using the
parametrization $P_h(t)=\left(t,h/(t+1)\right)$ and taking
$h=y(1+x)$.
\end{proof}

Notice that the above vector field is a multiple of the hamiltonian
one (\ref{E-hamiltonian}) with $\mu(x,y)=x(y-1)$.

\subsection{A difference equation with the Saito and Saitoh invariant}\label{Ss_lanostra}

We consider the \emph{real} difference equation
\begin{equation}\label{ODEnostra}
u_{n+2}=\frac{u_{n+1}(1+u_n)}{1+u_{n+1}}.
\end{equation}  This
equation possesses the invariant given by
$I(u_n,u_{n+1})=u_{n+1}(1+u_n)$. We found this difference equation
when looking for a recurrence with the same invariant as the Saito
and Saitoh map. Latter, we knew that this equation is a particular
case of the six equations introduced in \cite{P}, and also
considered in Section \ref{Ss_Pal}.

For each initial condition $u_0$, $u_1$ in $\G\subset \R$, the good
set of the equation (\ref{ODEnostra}), set
$I:=I(u_0,u_1)=u_1(1+u_0)$ and
$$t_j=t_j(u_0,u_1):=\dfrac{-1+(-1)^j\sqrt{1+4\,I}}{2},
\mbox{ and } \xi=-\frac{1+2\,I+i\sqrt{-1-4\,I}}{2\,I}.$$
\begin{propo}\label{P_odenostra} Let $\{u_n\}$ be a solution of Equation (\ref{ODEnostra}) with initial
condition $u_0,u_1$ in $\G$, then:
\begin{enumerate}
  \item[(a)] If $u_1=u_0$ and  $u_0\neq -1$, then the solution is
  constant.
  \item[(b)] If   $u_1(1+u_0)\geq -1/4$ with $u_0\neq t_1$ and $u_1\neq
  t_1$, then the solution   converges to $u=t_0$.
  \item[(c)]  If    $u_1(1+u_0)< -1/4$ then the solution is either $p$-periodic,  if
  $\xi$ is a $p$-root of unity, or such that the
set of accumulation points of $\{u_n\}_{n\in\N}$ is $\R$, otherwise.
Furthermore, the Equation (\ref{ODEnostra}) has $p$-periodic
solutions for all period $p\geq 3$.
\end{enumerate}
\end{propo}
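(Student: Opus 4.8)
The plan is to reduce the difference equation to a one–parameter family of M\"obius maps, exactly as for the Saito--Saitoh map in Section \ref{S_SS}. First I associate to Equation (\ref{ODEnostra}) the birational map $G(x,y)=\big(y,\,y(1+x)/(1+y)\big)$, which has the rational first integral $I(x,y)=y(1+x)$ and hence preserves the fibration by the curves $C_h=\{y(1+x)=h\}$; for $h\neq0$ these are genus $0$ conics (hyperbolae), while the degenerate value $h=0$ (a pair of lines) is treated by direct iteration. Each $C_h$ carries the trivial proper affine parametrization $P_h(t)=\big(t,\,h/(t+1)\big)$, with $P_h^{-1}(x,y)=x$; since $G$ is the shift $(u_n,u_{n+1})\mapsto(u_{n+1},u_{n+2})$ and $(u_n,u_{n+1})=P_h(u_n)$ along a solution lying on $C_h$, the orbit of $M_h:=P_h^{-1}\circ G\circ P_h$ issued from $t=u_0$ is exactly the solution sequence $\{u_n\}$. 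A direct computation gives $M_h(t)=h/(t+1)$, so in the notation of Proposition \ref{P_dinamica-moeb} one has $a=0$, $b=h$, $c=d=1$, whence $\Delta=1+4h$, the fixed–point equation $t^{2}+t-h=0$ produces the $t_j$ of the statement, and (for $\Delta<0$) rationalizing $\xi=(1+\sqrt\Delta)/(1-\sqrt\Delta)$ together with $|\xi|^{2}=\big((1+2h)^{2}-1-4h\big)/(4h^{2})=1$ turns $\xi$ into the expression in the statement.

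The three items then come from Proposition \ref{P_dinamica-moeb}. For (a), $u_1=u_0$ forces $u_0$ to be a fixed point of $M_h$ (the equation $t^{2}+t-h=0$ holds automatically when $h=u_0(1+u_0)$), so $\{u_n\}$ is constant; the hypothesis $u_0\neq-1$ only guarantees that $G$ is defined there. For (b) one has $\Delta\geq0$: when $\Delta>0$ and $h\neq0$ the two real fixed points are $t_0,t_1$, and since $M_h'(t_0)=-t_0/(t_0+1)=1/\xi$ with $|\xi|>1$, item $(a_1)$ of Proposition \ref{P_dinamica-moeb} makes $t_0$ the attractor of $M_h$ in $\widehat{\R}\setminus\{t_1\}$; when $\Delta=0$, item (b) of that proposition gives $t_0$ as a global attractor; and the degenerate value $h=0$, where $M_h\equiv0=t_0$, is immediate. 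In all cases the $M_h$–orbit of $u_0$, which is the sequence $\{u_n\}$, converges to $t_0$ unless it is pinned at the repeller $t_1$ --- precisely what $u_0\neq t_1$ and $u_1\neq t_1$ rule out. For (c), $\Delta<0$ gives $|\xi|=1$, so by item $(a_2)$ the map $M_h$ is conjugate on $\widehat{\R}\cong\su$ to a rotation through the angle $\arg\xi$; hence $\{u_n\}$ has minimal period $p$ exactly when $\xi$ is a primitive $p$-th root of unity, and otherwise the $M_h$–orbit is dense in $\widehat{\R}$, which --- transported through the projective parametrization of $C_h$ (a homeomorphism of $\widehat{\R}$ onto the projective closure of $C_h$), together with the fact that the abscissae of the affine points of $C_h$ fill $\R\setminus\{-1\}$ --- forces the set of accumulation points of $\{u_n\}$ to be all of $\R$.

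It remains to prove the last assertion of (c): the existence of a $p$-periodic solution for every $p\geq3$. For this I examine $h\mapsto\xi(h)$ on $(-\infty,-1/4)$: it is continuous, takes values on the unit circle, satisfies $\xi(h)\to1$ as $h\to-1/4$ and $\xi(h)\to-1$ as $h\to-\infty$, and has $\im\xi(h)>0$ throughout; by the intermediate value theorem $\arg\xi$ therefore attains every value in $(0,\pi)$, in particular $2\pi/p$ for each $p\geq3$. Choosing $h_p<-1/4$ with $\xi(h_p)=e^{2\pi i/p}$ --- a primitive $p$-th root of unity since $\gcd(1,p)=1$ --- every solution of (\ref{ODEnostra}) lying on $C_{h_p}$ is $p$-periodic; period $2$ cannot occur, since $\xi=-1$ is only the limiting value as $h\to-\infty$.

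The computations of $M_h$, of $M_h'(t_j)$, and the reduction of $\xi$ are routine, and the degenerate fibre $h=0$ is disposed of by hand. The two points that need some care are: the correspondence between the solution of (\ref{ODEnostra}) and the $M_h$–orbit near the parameter values $t=-1$ and $t=\infty$, which $P_h$ sends to the two points of the projective closure of $C_h$ on the line at infinity (as for the Saito--Saitoh map, this is why we describe the dynamics only in the affine plane); and, in case (c), making precise that density of the $M_h$–orbit on the topological circle $\widehat{\R}$ forces the abscissae $\{u_n\}$ to accumulate on all of $\R$ and not merely on a single branch of the hyperbola $C_h$ --- which follows because the projective extension of $P_h$ is a homeomorphism of $\widehat{\R}$ onto that closure.
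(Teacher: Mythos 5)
Your proposal is correct and follows essentially the same route as the paper: the paper derives Proposition \ref{P_odenostra} from the planar analysis in Proposition \ref{propolanostra}, but explicitly remarks that it also follows directly from the M\"obius family $M_h(t)=h/(t+1)$ via the identity $u_n=M_h^n(u_0)$, which is exactly your argument, including the same $\Delta=1+4h$, the same fixed points $t_j$, the same multiplier computation identifying $t_0$ as the attractor, and the same intermediate-value argument showing $\arg\xi$ sweeps out an interval yielding exactly the periods $p\geq 3$. The only cosmetic difference is a branch-of-argument convention (your $(0,\pi)$ versus the paper's normalized $(1/2,1)$), which does not affect the set of periods.
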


The proof of the above proposition follows directly from the study
of the planar real dynamical system given by the map
\begin{equation}\label{Fnostra}
F(x,y)=\left(y,\frac{y(1+x)}{1+y}\right),
\end{equation}
 by noticing that
$(u_n,u_{n+1})=F^n(u_0,u_1)$. This map is defined in
$\mathcal{G}(F)=\R^2\setminus\{\bigcup_{n\geq 0} F^{-n}(x,-1)\}$. As
the case of the map (\ref{Fsaito}), the map $F$ has the first
integral $V(x,y)=y(1+x)$, hence the sets $\gamma_h:=\{y(1+x)=h,\,
h\in\R\}\cap \mathcal{G}(F)$ are  invariant. Observe that the fixed
points of $F$ have the form $(x,x)$, with $x\neq -1$. For this map
we have the following result:

\begin{propo}\label{propolanostra} Consider the map (\ref{Fnostra}),
then
\begin{enumerate}[(a)]
   \item Any solution with initial condition on
   $\gamma_0$   reaches the point $(0,0)$ in finite time.
  \item[(b)] For $h\neq 0$, the dynamics in the invariant set $\gamma_h$ is
  the following:
  \begin{enumerate}
    \item[(i)] If $h>-1/4$, then there exists two fixed points of $F$ in $\gamma_h$,
    $Q_{j,h}=(x_j,x_j)$, with $j=0,1$,
    where $x_{j}=(-1+(-1)^j\sqrt{1+4h})/2$. Furthermore $Q_0$ is a
    global attractor of $F_{|\gamma_h\setminus Q_1}$, and
    $Q_1$ is a repeller.
    \item[(ii)] If $h=-1/4$, then there exists a unique fixed points of $F$ in
    $\gamma_{-\frac{1}{4}}$, $Q=(-1/2,-1/2)$
    which is a global attractor of $F_{|\gamma_{-\frac{1}{4}}}$.
    \item[(iii)] If $h<-1/4$, then either any initial condition in
    $\gamma_h$ gives rise to a periodic orbit with minimal period $p\geq
    3$ or it gives rise to an orbits which densely fills this set,
    depending on whether $(-1-2h-i\sqrt{-1-4h})/(2h)$ is a primitive $p$-root
    of the unity or not.
  \end{enumerate}
\end{enumerate}
\end{propo}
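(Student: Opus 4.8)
The plan is to reproduce, for the map (\ref{Fnostra}), the scheme used in the proofs of Propositions \ref{P_exBR-p1} and \ref{proposaitocomplexa}: on each fibre of the first integral $V(x,y)=y(1+x)$ the map $F$ is conjugate to an explicit M\"obius transformation, so the three regimes are read off from Proposition \ref{P_dinamica-moeb}. Part (a) is immediate: $\gamma_0=\big(\{x=-1\}\cup\{y=0\}\big)\cap\mathcal{G}(F)$, the point $(0,0)$ is fixed, and the identities $F(x,0)=(0,0)$ and $F(-1,y)=(y,0)$ show that every orbit starting on $\gamma_0$ lands on $(0,0)$ after at most two iterates.

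Next, fix $h\neq0$. The conic $C_h=\{y(1+x)=h\}$ has genus $0$ and admits the trivial proper affine parametrization $P_h(t)=\big(t,\,h/(t+1)\big)$, $t\neq-1$, with $P_h^{-1}(x,y)=x$; the value $t=-1$ corresponds to the removed infinity point $[0:1:0]$. A short computation gives
\[
M_h(t)=P_h^{-1}\circ F\circ P_h(t)=\frac{h}{t+1},
\]
a M\"obius transformation with $a=0$, $b=h$, $c=1$, $d=1$, and $F_{|\gamma_h}$ is conjugate via $P_h$ to the restriction of $M_h$ to its good set in $\R$; exactly as in the proof of Proposition \ref{proposaitocomplexa}, one describes $\gamma_h$ as $C_h$ minus the countable set of points lying over the line $y=-1$ together with their $F$-preimages. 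The fixed points of $M_h$ are the roots $t_{0,1}=(-1\pm\sqrt{1+4h})/2$ of $t^{2}+t-h=0$, and because $M_h(t_j)=t_j$ forces $h/(t_j+1)=t_j$, they correspond to the fixed points $Q_{j,h}=(x_j,x_j)$ of $F$, with $x_j$ as in the statement.

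With this reduction set $\Delta=(d-a)^2+4bc=1+4h$ and apply Proposition \ref{P_dinamica-moeb}. If $h>-1/4$ (both for $h>0$ and for $-1/4<h<0$), then $\Delta>0$, there are two real fixed points, and I would compute $M_h'(x_j)=-x_j/(x_j+1)$ and check that $|M_h'(x_0)|<1<|M_h'(x_1)|$, so that the fixed point singled out in Proposition \ref{P_dinamica-moeb}$(a_1)$ is $Q_{0,h}$; translating back by $P_h$ gives (b)(i). If $h=-1/4$, then $\Delta=0$ and Proposition \ref{P_dinamica-moeb}(b) yields the unique globally attracting fixed point $(-1/2,-1/2)$, which is (b)(ii). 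If $h<-1/4$, then $\Delta<0$, the fixed points are complex conjugate, and a direct evaluation of $\xi=(a+d+\sqrt\Delta)/(a+d-\sqrt\Delta)$ gives $\xi=(-1-2h-i\sqrt{-1-4h})/(2h)$, which has modulus $1$ and strictly positive imaginary part; so by Proposition \ref{P_dinamica-moeb}$(a_2)$ the map $M_h$, hence also $F_{|\gamma_h}$ (the M\"obius first integral $W$ being constant on $\widehat{\R}$ when the two fixed points are complex conjugate), is conjugate to a rotation of $\widehat{\R}$ with rotation number $\arg(\xi)\ (\mathrm{mod}\ 2\pi)$. Therefore every orbit on $\gamma_h$ is periodic when $\xi$ is a root of unity and densely fills $\gamma_h$ otherwise; since $\mathrm{Im}\,\xi>0$ rules out $\xi=\pm1$, the minimal period in the periodic case is at least $3$, which completes (b)(iii).

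The computations are routine M\"obius-map bookkeeping; the two delicate points are (i) transporting the conjugacy between $F_{|\gamma_h}$, $C_h$ and $M_h$ while keeping exact track of the good sets, in particular verifying that deleting a countable set of points affects neither the ``global attractor'' nor the ``dense orbit'' conclusion and that the fixed points $Q_{j,h}$ themselves belong to $\gamma_h$, and (ii) identifying which fixed point is the attractor in case (b)(i), which Proposition \ref{P_dinamica-moeb} does not specify and which the comparison of $|M_h'(x_0)|$ and $|M_h'(x_1)|$ with $1$ settles. I expect (i) to be the main, albeit mild, obstacle.
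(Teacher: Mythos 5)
Your proposal is correct and follows essentially the same route as the paper: part (a) by direct iteration, and part (b) by conjugating $F_{|\gamma_h}$ to $M_h(t)=h/(t+1)$ via the parametrization $P_h(t)=(t,h/(t+1))$ and then reading off the three regimes from Proposition \ref{P_dinamica-moeb} with $\Delta=1+4h$ and $\xi=(-1-2h-\sqrt{1+4h})/(2h)$. The only (harmless) divergences are cosmetic: you identify the attractor in case (b)(i) by comparing $|M_h'(x_j)|=|x_j/(x_j+1)|$ with $1$, and you exclude periods $1,2$ in case (b)(iii) via $\operatorname{Im}\xi>0$, whereas the paper computes the image of the rotation number function; both are equivalent.
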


\begin{proof}
First observe that after one or two iterations any orbit with
initial
  condition in  $\gamma_0=\left\{\{x=-1\}\right.$$\left.\cup\{y=0\}\right\}\cap\mathcal{G}(F)$ reaches the point $(0,0)$.
  Consider  $h\neq 0$, and  recall that any hyperbola
  $C_h=\{y(1+x)=h\}$
  admits   the   proper parametrization
  $P_h(t)=(t,h/(t+1))$ for $t\neq -1$.
 Some computations give  that $\gamma_h=C_h \setminus \{\bigcup_{n\geq 0}
F^{-n}(-h-1,-1)\}$; that
 $$
M_h(t)=P_h^{-1}\circ F \circ P_h(t)=\frac{h}{t+1}, $$  and that
$F_{|\gamma_h}$ is conjugate to $M_{h|\mathcal{G}(M_h)}$, where
$\mathcal{G}(M_h):=\R\setminus \{\cup_{n\geq 0} M_h^{-n}(-1)\}$.

 Using Proposition \ref{P_dinamica-moeb}, and setting $\Delta=1+4h$
and $\xi=(-1-2h-\sqrt{1+4h})/(2h)$,  we have that if
 $h>-1/4$, the map $M_h$ has two fixed
points $t_{0,h}$ (an attractor) and $t_{1,h}$ (a repeller) given by
$$
t_{j,h}=\frac{-1+ (-1)^j\sqrt{1+4h}}{2}.
$$
So on each set $\gamma_h$ there are two fixed points
$Q_{0,h}=(t_{0,h},t_{0,h})$ and $Q_{1,h}=(t_{1,h},t_{1,h})\in \R^2$,
which are an attractor and a repeller of $F_{|\gamma_h}$,
respectively. If $h=-1/4$, then $M_{-\frac{1}{4}}$ has a unique
fixed point $t=-1/2$ which is a global attractor in
$\mathcal{G}(M_{-\frac{1}{4}})$, hence the map
$F_{|\gamma_{-\frac{1}{4}}}$ has a global attractor in the point
$(-1/2,-1/2)$. If $h<-1/4$, then the map $M_{h}$ defined on
$\widehat{\R}$ is conjugate to a rotation with rotation number
$$
\theta(h)=\arg\Bigg(-\frac{1+2h}{2h}-i\frac{\sqrt{-1-4h}}{2h}\Bigg)\,(\mathrm{mod}\,2\pi).
$$
An straightforward computation shows that
$$
I=\left\{\mathrm{Image}(\theta(h)),\,
h<-\frac{1}{4}\right\}=\left(\frac{1}{2},1\right).
$$
So there are irreducible fractions with denominator $p\in I$ if and
only if $p\geq 3$ and therefore the map $F$ possesses periodic
orbits for all period $p\geq 3$, and they are located in the region
$\{y(1+x)<-1/4\}$. Furthermore the sets $\gamma_{h_p}$ where are
located the periodic orbits with minimal period $p$, are those such
that $\xi_{|h=h_p}=(-1-2h_p-i\sqrt{-1-4h_p})/(2h_p)$ is a primitive
$p$-root of the unity. Observe that $F$ has not $2$-periodic orbits.
If $\xi$ is not a root of unity then any orbit in $\gamma_h$ densely
fills this set.
\end{proof}

Observe that the proof of Proposition \ref{P_odenostra} also follows
straightforwardly from the  analysis of the M\"oebius maps $M_h$ by
noticing that due to the particular form of the family of
parameterizations $P_h$, we have that $u_n=M_h^n(u_0)$.

Also notice that the Lie symmetry of the map (\ref{Fnostra}) is
given in Section \ref{Ss_Lie_Symm_Pal}.

\subsection{The Palladino's recurrences}\label{Ss_Pal}

In \cite{P}, F.~Palladino presented the analysis of the
\emph{forbidden set} of a list of six difference equations in $\C$
with rational invariants. These invariants allow the author to make
an order reduction, so that the dynamics of the equation can be
described via a family of Riccati equations. In fact all the
Palladino's equations have an associated integrable birational map
preserving a genus $0$ fibration. Hence, the order reduction
observed by Palladino in these equations are a consequence of the
general fact observed in Proposition \ref{P_mainpropo}.

In the next subsections we will compute the associated Lie
symmetries of the maps associated to the Palladino's recurrences, we
study the conjugations between these maps and, as a last example, we
study the dynamics of a representative of each set of non-conjugate
recurrences.

\subsubsection{Lie symmetries for the maps associated to the Palladino's
recurrences.}\label{Ss_Lie_Symm_Pal}

In this section we consider the maps $F_j$, $j=1,\ldots,6$,
associated to each Palladino recurrence. For these maps we give
families of proper parametrizations associated to each fibration
$C_{j,h}=\{V_{j,1}-hV_{j,2}=0\}$ where $V_j=V_{j,1}/V_{j,2}$  is the
first integral corresponding with the invariants given in \cite{P},
we give the M\"obius transformations associated to the map, the
parametrization, and the associated Lie symmetries as well.

\medskip

\noindent 1. The equation
$u_{n+2}=\frac{u_{n+1}}{1+b\,(u_n-u_{n+1})}$ with
$b\in\C\setminus\{0\}$, has the following associated objects, which
characterize its dynamics:

{\small
\begin{center}
\noindent\begin{tabular}{| l | l | l |}
  \hline
  Associated map:    & First integral:  & Parametrization of  $C_{1,h}$: \\
 $F_1(x,y)=\left(y,\dfrac{y}{1+b\,(x-y)}\right)$   &  $V_1(x,y)=\dfrac{1+bx+by+b^2xy}{y}$
 &   $P_{1,h}(t)=\left(t,\dfrac{-bt-1}{b^2t+b-h}\right)$\\
   \hline
      M\"obius map: & Lie symmetry  of $M_{1,h}$:  & Lie symmetry of $F_1$:\\
 $M_{1,h}(t)=\dfrac{-bt-1}{b^2t+b-h}$  & $Y_{1,h}=\left(1+ \left( 2\,b-h \right) t+{b}^{2}{t}^{2}\right)\,\dfrac{\partial }{\partial t}$  &
 $\begin{array}{l}
 X_1=-\dfrac{(x-y)(bx+1)}{y}\\
X_2=-b(x-y)(by+1)
\end{array}$\\
  \hline
\end{tabular}
\end{center}
}

\bigskip

\noindent 2. Objects associated with the recurrence
$u_{n+2}=\frac{u_{n}}{1+b\,(u_{n+1}-u_{n})}$ with
$b\in\C\setminus\{0\}$:

{\small
\begin{center}
\noindent\begin{tabular}{| l | l | l |}
  \hline
  Associated map:    & First integral:  & Parametrization of  $C_{2,h}$: \\
 $F_2(x,y)=\left(y,\dfrac{x}{1+b\,(y-x)}\right)$   &  $V_2(x,y)=\dfrac{1+by+xy}{xy}$
 &   $P_{2,h}(t)=\left(t,\dfrac{1}{(h-1)t-b}\right)$\\
   \hline
      M\"obius map: & Lie symmetry  of $M_{2,h}$:  & Lie symmetry of $F_2$:\\
 $M_{2,h}(t)=\dfrac{1}{(h-1)t-b}$  & $Y_{2,h}=\left(-1-bt+(h-1)t^2\right)\,\dfrac{\partial }{\partial t}$  &
 $\begin{array}{l}
 X_1=\dfrac{x-y}{y}\\
X_2=-\dfrac{(x-y)(by+1)}{x}
\end{array}$\\
  \hline
\end{tabular}
\end{center}
}

\medskip

\noindent 3. Objects associated with the recurrence
$u_{n+2}=\frac{b\,(u_{n+1}-u_{n})+u_{n+1}^2}{u_{n}}$ with $b\in\C$:

{\small
\begin{center}
\noindent\begin{tabular}{| l | l | l |}
  \hline
  Associated map:    & First integral:  & Parametrization of  $C_{3,h}$: \\
 $F_3(x,y)=\left(y,\dfrac{-bx+by+y^2}{x}\right)$   &  $V_3(x,y)=\dfrac{y+b}{x}$
 &   $P_{3,h}(t)=\left(t,ht-b\right)$\\
   \hline
      M\"obius map: & Lie symmetry  of $M_{3,h}$:  & Lie symmetry of $F_3$:\\
 $M_{3,h}(t)=ht-b$  & $Y_{3,h}=\left(b+(1-h)t\right)\,\dfrac{\partial }{\partial t}$  &
 $\begin{array}{l}
 X_1=x-y\\
X_2=\dfrac{(x-y)(b+y)}{x}
\end{array}$\\
  \hline
\end{tabular}
\end{center}
}

\medskip

\noindent 4. Objects associated with the recurrence $u_{n+2}=\frac{b
u_{n+1}+u_{n+1}^2}{u_{n}+b}$ with $b\in\C\setminus\{0\}$:

{\small
\begin{center}
\noindent\begin{tabular}{| l | l | l |}
  \hline
  Associated map:    & First integral:  & Parametrization of  $C_{4,h}$: \\
 $F_4(x,y)=\left(y,\dfrac{by+y^2}{x+b}\right)$   &  $V_4(x,y)=\dfrac{x+b}{y}$
 &   $P_{4,h}(t)=\left(t,\dfrac{t+b}{h}\right)$\\
   \hline
      M\"obius map: & Lie symmetry  of $M_{4,h}$:  & Lie symmetry of $F_4$:\\
 $M_{4,h}(t)=\dfrac{t+b}{h}$  & $Y_{4,h}=\left(-\dfrac{b}{h}+
 \left(1-\dfrac{1}{h}\right)t\right)\,\dfrac{\partial }{\partial t}$  &
 $\begin{array}{l}
 X_1=x-y\\
X_2=\dfrac{y(x-y)}{x+b}
\end{array}$\\
  \hline
\end{tabular}
\end{center}
}

\medskip

\noindent 5. Objects associated with the recurrence $u_{n+2}=\frac{b
u_{n+1}+u_n u_{n+1}}{u_{n+1}+b}$ with $b\in\C\setminus\{0\}$:

{\small
\begin{center}
\noindent\begin{tabular}{| l | l | l |}
  \hline
  Associated map:    & First integral:  & Parametrization of  $C_{5,h}$: \\
 $F_5(x,y)=\left(y,\dfrac{by+xy}{y+b}\right)$   &  $V_5(x,y)=y(x+b)$
 &   $P_{5,h}(t)=\left(t,\dfrac{h}{t+b}\right)$\\
   \hline
      M\"obius map: & Lie symmetry  of $M_{5,h}$:  & Lie symmetry of $F_5$:\\
 $M_{5,h}(t)=\dfrac{h}{t+b}$  & $Y_{5,h}=\left(-h+bt+t^2\right)\,\dfrac{\partial }{\partial t}$  &
 $\begin{array}{l}
 X_1=(x-y)(b+x)\\
X_2=-y(x-y)
\end{array}$\\
  \hline
\end{tabular}
\end{center}
}

Observe that the recurrence (\ref{ODEnostra}) studied in Section
\ref{Ss_lanostra}, corresponds with this recurrence when $b=1$.

\newpage

\noindent 6. Objects associated with the recurrence $u_{n+2}=\frac{b
u_{n}-b u_{n+1}+u_n u_{n+1}}{u_{n+1}}$ with $b\in\C\setminus\{0\}$:

{\small
\begin{center}
\noindent\begin{tabular}{| l | l | l |}
  \hline
  Associated map:    & First integral:  & Parametrization of  $C_{6,h}$: \\
 $F_6(x,y)=\left(y,\dfrac{bx-by+xy}{y}\right)$   &  $V_6(x,y)=x(y+b)$
 &   $P_{6,h}(t)=\left(t,\dfrac{-bt+h}{t}\right)$\\
   \hline
      M\"obius map: & Lie symmetry  of $M_{6,h}$:  & Lie symmetry of $F_6$:\\
 $M_{6,h}(t)=\dfrac{-bt+h}{t}$  & $Y_{6,h}=\left(-h+bt+t^2\right)\,\dfrac{\partial }{\partial t}$  &
 $\begin{array}{l}
 X_1=x(x-y)\\
X_2=-(x-y)(y+b)
\end{array}$\\
  \hline
\end{tabular}
\end{center}
}

\subsubsection{Conjugations in the set of Palladino's
maps}\label{Ss_Pal_conju}

In this Section we apply Proposition \ref{P_conjgacions} to detect
the conjugations between the set of Palladino's maps, obtaining:

\begin{propo}\label{P_conju-palladino}
\begin{enumerate}[(a)]
  \item The maps $F_1$, $F_2$, $F_5$ and $F_6$ are birationally
  conjugate.
  \item The maps $F_3$ and $F_4$ are birationally conjugate.
  \item Any map in the set $\{F_1,F_2,F_5,F_6\}$ is not
  conjugate with any map in the set $\{F_3,F_4\}$ via a conjugation
  which is a correspondence between
the respective invariant fibrations $\{C_{j,h}\}$.
\end{enumerate}
\end{propo}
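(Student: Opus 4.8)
The plan is to apply Proposition \ref{P_conjgacions} directly in both directions: to exhibit conjugations for parts (a) and (b), and to obstruct them for part (c). For parts (a) and (b) the key observation is that the associated M\"obius transformations listed in Section \ref{Ss_Lie_Symm_Pal} all have a very simple parametrized form, so I would look for a correspondence $k=f(h)$ between the energy levels together with a family of M\"obius maps $m_h$ conjugating $M_{i,h}$ to $M_{j,f(h)}$. In each case I expect $f$ to be a M\"obius (hence birational) function of $h$ and $m_h$ to depend rationally on $h$, so that by the remark following Proposition \ref{P_conjgacions} the resulting $\Psi$ given by Equation (\ref{E_conjugacions}) is a birational (global) map; this is exactly the situation announced in the statement. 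Concretely, comparing for instance $M_{5,h}(t)=h/(t+b)$ with $M_{6,h}(t)=(-bt+h)/t$, or with $M_{1,h}$ and $M_{2,h}$, one sees that the trace/determinant invariants of the $2\times2$ matrices representing these M\"obius maps match after a suitable reparametrization of $h$, and one reads off $m_h$ from the change of basis between the two matrices; similarly $M_{3,h}(t)=ht-b$ and $M_{4,h}(t)=(t+b)/h$ are both affine (or conjugate to affine) maps, and matching their multipliers gives the conjugation. Since the parametrizations $P_{j,h}$ are explicitly polynomial/rational in both $t$ and $h$, composing as in (\ref{E_conjugacions}) yields the explicit birational conjugations.

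For part (c) I would argue by contradiction using the "only if" direction of Proposition \ref{P_conjgacions}: a conjugation $\Psi$ between, say, $F_1$ (or any of $F_1,F_2,F_5,F_6$) and $F_3$ (or $F_4$) that maps the fibration $\{C_{1,h}\}$ to $\{C_{3,k}\}$ would force, for each $h$, a one-dimensional conjugation $m_h$ between the M\"obius maps $M_{1,h}$ and $M_{3,f(h)}$. The obstruction is a conjugacy invariant of M\"obius transformations: $M_{3,h}(t)=ht-b$ and $M_{4,h}(t)=(t+b)/h$ are \emph{parabolic-free affine} maps — their representing matrices are triangular with two distinct eigenvalues for generic $h$, i.e. they always have a fixed point at $\infty$ and are conjugate to the linear map $t\mapsto ht$. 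By contrast the maps $M_{1,h},M_{2,h},M_{5,h},M_{6,h}$ are genuinely non-affine: $c(h)\neq0$, and for generic $h$ their two fixed points are finite and the multiplier $\xi$ (in the notation of Proposition \ref{P_dinamica-moeb}) is not trivial. The cleanest invariant to use is the multiplier at the fixed points: for $M_3$ it is simply $h$, ranging over all of $\widehat\K$, whereas for $M_5$ one computes $\xi$ from $\Delta=(d-a)^2+4bc=b^2+4h$ and $\xi=(b+2\sqrt{b^2+4h}\,\dots)$-type expression, and the two families of multiplier functions, regarded as (multivalued) functions of the respective energy parameters, are not related by any reparametrization $f$ compatible with being a correspondence of fibrations. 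I would make this precise by noting that conjugate M\"obius maps have the same (unordered) pair of multipliers $\{\lambda,1/\lambda\}$, so the conjugacy via $\Psi$ would impose $\{\text{mult. of }M_{1,h}\}=\{\text{mult. of }M_{3,f(h)}\}$ for all $h$, and then check that no choice of $f$ makes this an identity — for example because the map $h\mapsto$ (multiplier of $M_3$) is a M\"obius map of $h$ while $h\mapsto$ (multiplier of $M_5$) is not, so they cannot agree after relabeling by a bijection $f$ that is forced to respect the parameter structure.

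The main obstacle I anticipate is part (c): producing a clean, coordinate-free conjugacy invariant that is robust enough to survive the allowed reparametrizations $f$. One subtlety is that $m_h$ need only be an invertible map on the \emph{good set} $\mathcal G(M_h)$, not literally a M\"obius transformation defined on all of $\widehat\K$ — so a priori it could be a more exotic conjugacy on the orbit-space level; I would need to argue (using that both $M_{1,h}$ and $M_{3,k}$ are themselves M\"obius and that a conjugacy between two M\"obius maps on a cofinite set extends to a global M\"obius conjugacy, since it must carry the dynamical pictures — two fixed points with a given multiplier, or a single parabolic fixed point — onto each other) that $m_h$ is effectively a M\"obius map, at which point the multiplier argument applies. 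The parts (a) and (b) are, by contrast, essentially a finite bookkeeping exercise: pairwise matching of M\"obius normal forms and then substitution into (\ref{E_conjugacions}).
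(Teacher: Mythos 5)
For parts (a) and (b) your plan is essentially the paper's proof: match the M\"obius normal forms to find a correspondence $k=f(h)$ and a family $m_h$, then substitute into Equation (\ref{E_conjugacions}); the paper carries out exactly this bookkeeping, obtaining $m_h$ rational in $h$ and $f$ birational, so that $\Psi$ is a global birational conjugation. That part is fine modulo the explicit computations.

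Part (c) contains a genuine gap. The dichotomy you propose --- $M_{3,h},M_{4,h}$ are ``affine with a fixed point at infinity'' while $M_{1,h},M_{2,h},M_{5,h},M_{6,h}$ are ``genuinely non-affine with finite fixed points'' --- is not a conjugacy invariant of M\"obius maps, since a conjugation can send $\infty$ anywhere. The only invariant of a non-identity M\"obius map is the unordered multiplier pair $\{\lambda,1/\lambda\}$, equivalently $\mathrm{tr}^2/\det$ of a representing matrix, and computing it shows that the individual maps in the two families \emph{are} generically conjugate: e.g.\ $M_{3,k}$ has $\mathrm{tr}^2/\det=(k+1)^2/k$ and $M_{5,h}$ has $\mathrm{tr}^2/\det=-b^2/h$, and the equation $(k+1)^2/k=-b^2/h$ is solvable for $k$ for generic $h$. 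Indeed the paper explicitly notes that fiberwise conjugations do exist. Your fallback argument --- that the multiplier of $M_3$ is a M\"obius function of its parameter while that of $M_5$ is not, ``so they cannot agree after relabeling by a bijection $f$'' --- does not close the gap either, because Proposition \ref{P_conjgacions} only requires $f$ to be a correspondence, not a rational or M\"obius function; nothing prevents one from trying to define $f$ as the (algebraic, multivalued) solution of the multiplier-matching equation. The obstruction the paper actually uses lives one level up: the relation forced between the parameters, $h=-b^2k/(k+1)^2$ in the example above, is generically two-to-one ($k$ and $1/k$ give the same $h$), so it does not induce a bijection between the level sets of the two first integrals, and hence no conjugation that is a correspondence between the fibrations $\{C_{i,h}\}$ and $\{C_{j,k}\}$ can exist. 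To repair your proof of (c) you would need to replace the fixed-point/multiplier dichotomy by this non-bijectivity argument on the parameter correspondence. (Your side remark that $m_h$ is a priori only an invertible map on the good set, and must be upgraded to a genuine M\"obius conjugacy before multipliers can be compared, is a legitimate point, but it does not rescue the main argument.)
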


\begin{proof} (a) A computation shows that the maps $M_{1,h}$ and
$M_{2,k}$ defined in the above section are conjugate via
$$m_{h}(t)=\dfrac{h}{b(b^2t+b-h)}$$ with the correspondence between the level sets given
by $$k=f(h):=-\dfrac{b^3-h}{h}.$$ Hence, by using Equation
(\ref{E_conjugacions}) in Proposition \ref{P_conjgacions}, we obtain
that $F_1=\Psi^{-1}F_2\Psi$, where
$$
\Psi(x,y)=\Psi(x,y)=P_{2,f(h)}\circ m_{h}\circ
P_{1,h}^{-1}(x,y)_{\left|
                   h=V_1(x,y)\right.}=\left(-\dfrac{by+1}{b},-\dfrac{bx+1}{b(bx-by+1)}\right),
$$
and
$$\Psi^{-1}(x,y)=P_{1,f^{-1}(k)}\circ m_{f^{-1}(k)}\circ
P_{2,k}^{-1}(x,y)_{\left|
                   k=V_2(x,y)\right.}=\left({-{\frac {{b}^{2}xy+2\,by+1}{b \left( by+1
\right) }} ,-{\frac {bx+1}{b}}}\right).
$$

Analogously, the maps $M_{2,h}$ and $M_{5,k}$ are conjugate via the
map $m_{h}(t)=-{1}/{(t+b)}$, with the correspondence given by
$k=f(h):=h-1$. Hence  $F_2=\Psi^{-1}F_5\Psi$, where $
\Psi(x,y)=\left(-(bx+1)/{x},-(by+1)/{y}\right), $ and
$\Psi^{-1}(x,y)=\left(-1/(x+b),-1/(y+b)\right)$.

The maps $M_{6,h}$ and $M_{5,k}$ are conjugate via the map
$m_{h}(t)=-h/t$ with the correspondence $k=f(h)=h$. Again  we get
$F_6=\Psi^{-1}F_5\Psi$ with $
\Psi(x,y)=\left(-y-b,-{x(y+b)}/{y}\right), $ and
$\Psi^{-1}(x,y)=\left(-y \left( x+b \right)/x,-x-b\right)$.

(b)  The maps $M_{3,h}$ and $M_{4,k}$ are conjugate via the map
$m_{h}(t)={b\,t}/{((h-1)t-b)}$ with the correspondence $k=f(h)=h$,
so $F_3=\Psi^{-1}F_4\Psi$ with $$\Psi(x,y)=P_{4,f(h)}\circ
m_{h}\circ P_{3,h}^{-1}(x,y)_{\left|
                   h=V_3(x,y)\right.}=\left(\frac{b\, x}{y-x},\frac{-b\,xy}{(x-y)(y+b)}\right),
$$ and $\Psi^{-1}(x,y)=\left(bxy/ ( (x-y) (x+b)
),by/(x-y)\right)$.

(c) The statement follows from Proposition \ref{P_conjgacions}, by
taking into account the fact that when we look for  a conjugation
between any of the M\"obius maps $M_i$ with $i=1,2,5,6$, and any
$M_j$ with $j=3,4$, we obtain that there exists such conjugations,
but there is not a bijection between the level sets of $V_3$ and
$V_4$, so it is not possible to construct conjugations between the
maps in the set $\{F_1,F_2,F_5,F_6\}$ and the maps in the set
$\{F_3,F_4\}$, via a conjugation which is a correspondence between
the respective associated invariant fibrations.~\end{proof}

\subsubsection{Analysis of the Palladino's
recurrences number $3$ and $5$}\label{Ss_Pal-analisis}

From Proposition \ref{P_conju-palladino}, the Palladino recurrences
number  $1,2,5$ and $6$ on one hand, and number  $3$ and $4$ on the
other, have the same dynamics from a qualitative viewpoint.  We
characterize the dynamics of a representative of each set of
recurrences, by studying the maps $M_3$ and $M_5$. First, we
consider the difference equation
\begin{equation}\label{E_Pal-3}
u_{n+2}=\dfrac{b\,(u_{n+1}-u_{n})+u_{n+1}^2}{u_{n}}, \mbox{ with }
b\in\C.
\end{equation}
For each initial condition $u_0$, $u_1$ in its good set
$\G\subset\C$,  set $I=V_3(u_0,u_1)=(u_1+b)/u_0$. Then:
\begin{propo}\label{P_P3}
Let $\{u_n\}$ be a solution of Equation (\ref{E_Pal-3}) with initial
condition $u_0,u_1$ in $\G$, then:
\begin{enumerate}
  \item[(a)] If   $|I|<1$, then the solution converges to
  $u=b/(I-1)$.
  \item[(b)] If $|I|>1$ or $I=1$ and $b\neq 0$, then the solution is
  unbounded.
  \item[(c)]  If $I$ is a $p$-root of the unity with $I\neq 1$, then the solution is
  $p$-periodic.
  \item[(d)] If $|I|=1$ and $I$ is not a $p$-root
of the unity, then $\{u_n\}$ is conjugate to a sequence generated by
an irrational rotation of angle $\arg(I)$, and  the set of
accumulation points of $\{u_n\}$ is a the circle in  $\C$ with
center $z=b/(I-1)$ and radius $|u_0-b/(I-1)|$.
  \item[(e)] If $I=1$ and $b=0$, then the solution is constant .
\end{enumerate}
\end{propo}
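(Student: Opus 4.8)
The plan is to reduce the study of Equation (\ref{E_Pal-3}) to the iteration of the one–dimensional affine map attached to the fibration, and then to read off each of the five cases from an explicit closed form for the iterates. Concretely, I would start from the objects associated with recurrence number $3$ listed in Section \ref{Ss_Lie_Symm_Pal}: the planar map $F_3(x,y)=(y,(-bx+by+y^2)/x)$, its first integral $V_3(x,y)=(y+b)/x$, the proper parametrization $P_{3,h}(t)=(t,ht-b)$ of $C_{3,h}$, and the associated M\"obius map $M_{3,h}(t)=ht-b$. Setting $(u_n,u_{n+1})=F_3^{n}(u_0,u_1)$ and $I=V_3(u_0,u_1)=(u_1+b)/u_0$, the whole orbit lies on $C_{3,I}$; since $P_{3,I}^{-1}(x,y)=x$ is simply the projection onto the first coordinate, the conjugacy $M_{3,I}=P_{3,I}^{-1}\circ F_3\circ P_{3,I}$ gives $u_n=M_{3,I}^{n}(u_0)$, that is, $u_{n+1}=I\,u_n-b$ for every $n$, and the hypothesis $u_0,u_1\in\mathcal{G}$ guarantees that all these iterates are defined. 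One remark I would make here is that Proposition \ref{P_dinamica-moeb} cannot be invoked directly, because $M_{3,h}$ has vanishing coefficient $c$, so the dynamics of the affine map $t\mapsto It-b$ has to be discussed by hand.

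Next I would write down the closed form. For $I=1$ one gets $u_n=u_0-nb$, which is the constant solution when $b=0$ (case (e)) and is unbounded when $b\neq 0$ (the $I=1$ half of case (b)). For $I\neq 1$ the map $M_{3,I}$ has the unique fixed point $u^{*}=b/(I-1)$, and the translation $s=t-u^{*}$ conjugates it to $s\mapsto Is$, hence
\[
u_n=u^{*}+I^{n}\,(u_0-u^{*}),\qquad n\ge 0 .
\]
From this formula the remaining statements are immediate: if $|I|<1$ then $I^{n}\to 0$ and $u_n\to u^{*}=b/(I-1)$, giving (a); if $|I|>1$ then $|u_n|\to\infty$ unless $u_0=u^{*}$, which is (b); and if $|I|=1$ all the points $u_n$ lie on the circle of centre $u^{*}$ and radius $\rho=|u_0-u^{*}|$, on which $u_n\mapsto u_{n+1}$ acts as the rotation of angle $\arg(I)$. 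In this last situation, if $I$ is a $p$-th root of unity then $I^{p}=1$ forces $u_{n+p}=u_n$, which is (c); and if $I$ is not a root of unity then, by Weyl's equidistribution theorem, $\{I^{n}\}_{n\in\N}$ is dense (indeed equidistributed) on the unit circle, so $\{u_n\}$ is conjugate to an irrational rotation of angle $\arg(I)$ and accumulates on the whole circle $\{\,|z-b/(I-1)|=\rho\,\}$, which is (d).

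I do not expect a serious obstacle in this argument: once the reduction to the affine one–dimensional map is in place everything is elementary. The only points that need some care are keeping track of the good–set hypothesis so that the orbit is genuinely infinite, isolating the degenerate initial condition $u_0=b/(I-1)$ (which produces a constant solution and so must be tacitly excluded from the "unbounded" conclusion of (b), while being harmless in (c) and (d)), and, as noted above, replacing the M\"obius classification of Proposition \ref{P_dinamica-moeb}—valid only for $c\neq 0$—by the direct discussion of $t\mapsto It-b$.
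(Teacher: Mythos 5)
Your proposal is correct and takes essentially the same route as the paper: both reduce the recurrence to $u_n=M_{3,I}^n(u_0)$ with $M_{3,I}(t)=It-b$ via the parametrization $P_{3,h}(t)=(t,ht-b)$, and then analyze the affine map directly (the paper likewise conjugates it to $z\mapsto hz$ about the fixed point $b/(h-1)$ rather than invoking Proposition \ref{P_dinamica-moeb}). Your explicit closed form and the remark that the unboundedness in case (b) tacitly excludes the initial condition $u_0=b/(I-1)$ are accurate refinements, not deviations.
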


\begin{proof} The proof follows straightforwardly
 by noticing that due to the particular form of the family of parameterizations
 $\{P_{3,h}\}$,
we have that $u_n=M_{3,h}^n(u_0)$, for $h=V_3(u_0,u_1)$. Now, if
$h=1$, then $M_{3,h}$ is the identity if $b=0$ and each orbit of
$M_{3,1}$ is unbounded if $b\neq 0$.  If $h\neq 1$, then $M_{3,h}$
has a unique fixed point $t_h:=b/(h-1)$. This point is a global
attractor if $|h|<1$, and a repeller (and the orbits are unbounded)
if $|h|>1$. If $|h|=1$ with $h\neq 1$ then $M_{3,h}$ is conjugate to
the rotation given by $z\to h\,z$, with $z\in\C$.~\end{proof}

\smallskip

Now, we consider the difference equation
\begin{equation}\label{E_Pal-5}
u_{n+2}=\frac{b u_{n+1}+u_n u_{n+1}}{u_{n+1}+b},\mbox{ with }
b\in\C\setminus\{0\}
\end{equation}
 For
each initial condition $u_0$, $u_1$ in its good set $\G\subset\C$,
set
$$t_j=t_j(u_0,u_1):=\dfrac{-b+(-1)^j\sqrt{b^2+4\,u_1(u_0+b)}}{2}
\mbox{ and } \xi=-\frac{\left( b+\sqrt {{b}^{2}+4\,u_1(u_0+b)}
\right) ^{2}}{4u_1(u_0+b)}.
$$

\begin{propo}\label{P_P5} Let $\{u_n\}$ be a solution of Equation (\ref{E_Pal-5}) with initial condition $u_0,u_1$ in
$\G$, then:
\begin{enumerate}
  \item[(a)] When $u_1(u_0+b)=-b^2/4$,  the solution converges to
  $u=-b/2$.
  \item[(b)] When $u_1(u_0+b)\neq -b^2/4$, then:
\begin{enumerate}
  \item[(i)] If $u_1=u_0=t_j$, for any $j=1,2$, then the solution is
  constant.
  \item[(ii)] If $|\xi|<1$, $u_0\neq t_1$ and $u_1\neq t_1$, then
  the solution  converges to $u=t_0$; and if $|\xi|>1$, $u_0\neq t_0$ and $u_1\neq t_0$, then
  the solution  converges to $u=t_1$.
  \item[(iii)] If $|\xi|=1$, then the solution is either $p$-periodic  if
  $\xi$ is a $p$-root of unity, or such that
is conjugate to a sequence generated by an irrational rotation of
angle $\arg(\xi)$, and  the set of accumulation points of $\{u_n\}$
is a set homeomorphic to $\mathbb{S}^1$ in  $\widehat{\C}$.
\end{enumerate}
\end{enumerate}
\end{propo}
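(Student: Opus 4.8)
The plan is to reduce everything to the one–dimensional M\"obius dynamics of $M_{5,h}$, exactly as in the proofs of Propositions \ref{P_odenostra} and \ref{P_P3}. \textbf{Step 1 (reduction).} Writing $F_5$ for the map in the table of Section \ref{Ss_Lie_Symm_Pal}, one has $(u_n,u_{n+1})=F_5^{n}(u_0,u_1)$; since the proper parametrization $P_{5,h}(t)=(t,h/(t+b))$ has first component equal to $t$, one gets $P_{5,h}^{-1}(x,y)=x$ and hence $u_n=M_{5,h}^{n}(u_0)$, where $h=V_5(u_0,u_1)=u_1(u_0+b)$ and $M_{5,h}(t)=h/(t+b)$. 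Thus the whole statement follows from the dynamics of the single M\"obius map $M_{5,h}$ on $\widehat{\C}$, read off through $u_n=M_{5,h}^{n}(u_0)$.

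\textbf{Step 2 (normal-form data).} Apply Proposition \ref{P_dinamica-moeb} to $M_{5,h}(t)=(0\cdot t+h)/(1\cdot t+b)$ (here $c=1\neq 0$). A short computation gives $\Delta=b^2+4h=b^2+4u_1(u_0+b)$ and, after rationalizing and using $b^2-\Delta=-4h$, $\xi=(b+\sqrt{\Delta})/(b-\sqrt{\Delta})=-(b+\sqrt{\Delta})^2/(4u_1(u_0+b))$, which are precisely the quantities introduced before the statement; the fixed points of $M_{5,h}$ are the roots $t_j=(-b+(-1)^j\sqrt{\Delta})/2$ of $t^2+bt-h=0$.

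\textbf{Step 3 (case split).} If $u_1(u_0+b)=-b^2/4$ then $\Delta=0$, and Proposition \ref{P_dinamica-moeb}(b) says $M_{5,h}$ has a single fixed point $t_0=-b/2$, a global attractor in $\widehat{\C}$; hence $u_n\to -b/2$, which is (a). If $u_1(u_0+b)\neq -b^2/4$ then $\Delta\neq 0$ and Proposition \ref{P_dinamica-moeb}(a) applies with two fixed points $t_0,t_1$. When $u_0=u_1$ one checks $h=u_0(u_0+b)$, so $u_0$ is a fixed point of $M_{5,h}$, i.e. $u_0\in\{t_0,t_1\}$, and the sequence is constant, giving (b)(i). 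When $|\xi|\neq 1$, part $(a_1)$ yields an attracting fixed point with basin $\widehat{\C}$ minus the other (repelling) fixed point, so any solution not starting at the repeller converges to the attractor, giving (b)(ii). When $|\xi|=1$, part $(a_2)$ gives that $M_{5,h}$ is conjugate to the rotation of angle $\arg\xi$ with first integral $W(t)=|(t-t_0)/(t-t_1)|$; hence $\{u_n\}$ is $p$-periodic exactly when $\xi$ is a primitive $p$-th root of unity, and otherwise the orbit is dense in the level set $\{W=W(u_0)\}$, which in $\widehat{\C}$ is homeomorphic to $\su$, giving (b)(iii).

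\textbf{Main obstacle.} The argument is essentially bookkeeping on top of Proposition \ref{P_dinamica-moeb}; the one point requiring care is the identification in (b)(ii) of \emph{which} of $t_0,t_1$ is the attractor: from $t_j(t_j+b)=h$ one gets $M_{5,h}'(t_j)=-t_j^2/h$, and since $t_0t_1=-h$ the two multipliers are reciprocal, $\{M_{5,h}'(t_0),M_{5,h}'(t_1)\}=\{\xi,1/\xi\}$, so the attractor is the fixed point at which the multiplier has modulus $<1$, i.e. it is singled out by the sign of $\log|\xi|$. One must also dispose of the degenerate parameter values (such as $h=0$, or $u_0=-b$) which are removed by passing to the good set $\G$, so that the reduction $u_n=M_{5,h}^{n}(u_0)$ is legitimate throughout.
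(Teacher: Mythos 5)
Your reduction $u_n=M_{5,h}^{\,n}(u_0)$ with $h=V_5(u_0,u_1)=u_1(u_0+b)$, followed by an appeal to Proposition \ref{P_dinamica-moeb}, is exactly the paper's proof (which consists of precisely this one observation); your Steps 1--3 just make it explicit, and parts (a), (b)(i) and (b)(iii) are handled correctly, including the identification of the accumulation set with the Apollonius circle $\{W=W(u_0)\}$.

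The one place where you stop short is the one place that actually needs checking. Part (b)(ii) does not merely assert that one of $t_0,t_1$ attracts and the other repels --- which is all Proposition \ref{P_dinamica-moeb}$(a_1)$ gives and all you establish --- it asserts \emph{which} fixed point attracts for each sign of $\log|\xi|$, and you leave this as ``singled out by the sign of $\log|\xi|$''. Carrying it out with your own formulas (write $\Delta=b^2+4u_1(u_0+b)$, so $t_0+t_1=-b$ and $t_0-t_1=\sqrt{\Delta}$) gives
\[
M_{5,h}'(t_0)=\frac{t_0}{t_1}=\frac{b-\sqrt{\Delta}}{b+\sqrt{\Delta}}=\frac{1}{\xi},
\qquad
M_{5,h}'(t_1)=\frac{t_1}{t_0}=\xi,
\]
so with the $\xi$ defined just before the proposition the attractor is $t_1$ when $|\xi|<1$ and $t_0$ when $|\xi|>1$ --- the opposite pairing to the one asserted in (b)(ii). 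A numerical check confirms this: for $b=1$, $u_0=0$, $u_1=2$ one has $h=2$, $t_0=1$, $t_1=-2$, $\xi=-2$, and the orbit $0,\,2,\,2/3,\,6/5,\,10/11,\,22/21,\dots$ converges to $t_0=1$, not to $t_1$, even though $|\xi|>1$. So either the labelling of $t_0,t_1$ (equivalently, the branch convention hidden in $\xi$) must be swapped in the statement, or the argument proves a corrected version of (b)(ii); as written, your proof does not (and cannot) establish (b)(ii) literally. Everything else coincides with the paper's intended argument.
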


\begin{proof} The proof is a direct application of Proposition
\ref{P_dinamica-moeb} since, from the particular form of the
parameterizations $P_{5,h}$, we have that $u_n=M_{5,h}^n(u_0)$.
\end{proof}

\newpage

\section*{Appendix: Proper parameterizations and its inversion}\label{A_inversion}

Suppose that $C$ is a rational curve. Observe that $\K(C)[t]$, the
polynomials with coefficients in the field of rational functions in
$C$, is a Euclidean domain, so the Euclidean algorithm can be
applied to compute the greatest common divisor. Let $P(t)$ be a
rational affine parametrization of $C$ over $\K$ defined as
$$P(t)=\Bigg(\frac{P_{11}(t)}{P_{12}(t)},\frac{P_{21}(t)}{P_{22}(t)}\Bigg)\,,$$
where $P_{ij}(t)\in\K[t]$ and $\gcd(P_{1i},P_{2i})=1$, that is
$P(t)$ is in \emph{reduced form}. The next results give a quick way
to check whether a parametrization is proper or not:

\begin{teo}\label{T_proper-or-not}(\cite[Thm. 4.21]{SWPD})
 Let $C$ be an affine rational curve defined over $\K$ with defining polynomial
 $f(x,y)\in\K[x,y]$, and let $P(t)$ be a parametrization
 of $C$. Then $P(t)$ is proper if and only if
 $$deg(P(t))=max\{deg_x(f),deg_y(f)\}\,,$$
 where the degree of $P(t)$ is the maximum of the degrees of its rational
 components.
\end{teo}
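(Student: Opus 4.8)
\medskip

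The plan is to recast the whole statement through the standard dictionary between rational parameterizations of $C$ and field embeddings of $\K(C)$ into $\K(t)$, so that it becomes an instance of the tower law for field extensions. Since $P=(P_1,P_2)$ is a rational parametrization it is dominant onto $C$, hence induces an injective $\K$-homomorphism $P^{*}\colon\K(C)\hookrightarrow\K(t)$ sending the coordinate functions of $C$ to $P_1(t)$ and $P_2(t)$. Writing $P_1=p/q$ in lowest terms, the parameter $t$ is a root of $p(T)-P_1(t)\,q(T)$, so $t$ is algebraic over $\K(P_1(t))$ and the index $n:=[\K(t):P^{*}(\K(C))]$ is finite. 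The key observation is that \emph{$P$ is proper exactly when $n=1$}: a rational inverse of $P$ is the same thing as a rational function $g$ on $C$ with $P^{*}(g)=t$, i.e.\ with $t\in P^{*}(\K(C))$, and this occurs precisely when $P^{*}$ is surjective. So it is enough to establish the identity $\deg(P)=n\cdot\max\{\deg_{x}f,\deg_{y}f\}$.

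Next I would compute $\deg P_1$ and $\deg P_2$ via the tower law. First, for any rational function $r=p/q$ in lowest terms one has $\deg(r)=\max\{\deg p,\deg q\}=[\K(t):\K(r)]$: the polynomial $p(T)-r\,q(T)$ has $T$-degree $\max\{\deg p,\deg q\}$, is primitive in $\K[r][T]$ because $\gcd(p,q)=1$, and has degree one in $r$, hence is irreducible over $\K(r)$ by Gauss's lemma and is therefore the minimal polynomial of $t$ over $\K(r)$. Applying this to $r=P_1$ and inserting the intermediate field $P^{*}(\K(C))$ in the tower $\K(P_1(t))\subseteq P^{*}(\K(C))\subseteq\K(t)$ gives $\deg P_1=[\K(t):\K(P_1(t))]=n\cdot[\K(C):\K(x)]$, and symmetrically $\deg P_2=n\cdot[\K(C):\K(y)]$. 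Finally $[\K(C):\K(x)]=\deg_{y}f$: assuming $\deg_{y}f\ge1$, the polynomial $f$ is primitive and irreducible as an element of $\K[x][y]$, hence irreducible in $\K(x)[y]$ by Gauss's lemma, so $\K(C)=\mathrm{Frac}\bigl(\K[x,y]/(f)\bigr)=\K(x)[y]/(f)$ is an extension of $\K(x)$ of degree $\deg_{y}f$; in the same way $[\K(C):\K(y)]=\deg_{x}f$. Taking the maximum of $\deg P_1$ and $\deg P_2$ yields $\deg(P)=n\cdot\max\{\deg_{x}f,\deg_{y}f\}$, and the theorem drops out.

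The step I expect to require the most care is this last bit of bookkeeping: one must check that $\K(C)$ really is the fraction field of $\K[x,y]/(f)$, justify the two passages through Gauss's lemma, and dispose separately of the degenerate situations in which $f$ omits a variable — then $C$ is a coordinate line and the claim is verified by hand. Everything else is a routine application of multiplicativity of field degrees; in particular no use of Lüroth's theorem or of resultant/implicitization computations is needed, although one could alternatively factor $P$ as a proper parametrization followed by a rational self-map of the line of degree $n$ and run the same argument.
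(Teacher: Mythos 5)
The paper does not prove this statement; it is quoted verbatim from \cite[Thm.~4.21]{SWPD} in the appendix, so there is no in-paper argument to compare against. Your function-field proof is correct and is essentially the standard one: the identity $\deg(P)=n\cdot\max\{\deg_x f,\deg_y f\}$ with $n=[\K(t):P^{*}(\K(C))]$ follows from the tower law together with the two facts $[\K(t):\K(r)]=\deg r$ for nonconstant $r\in\K(t)$ and $[\K(C):\K(x)]=\deg_y f$, and combined with the observation that properness means $n=1$ it yields the equivalence; you also correctly isolate the only degenerate cases (a constant component of $P$, equivalently $f$ omitting a variable) for separate elementary treatment. The one spot worth tightening is the Gauss's lemma step for $p(T)-r\,q(T)$: primitivity is most cleanly checked by viewing it in $\K[T][r]$ as a degree-one polynomial in $r$ with coprime coefficients $p(T)$ and $-q(T)$, rather than directly in $\K[r][T]$.
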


The next result allow us to compute the inverse of $P(t)$.

\begin{teo}\label{T_inversa-param}(\cite[Thm. 4.37]{SWPD})
 Let $P(t)$ be a proper parametrization in reduced form with nonconstant components
 of a rational curve $C$. Let $$\begin{array}{l}
                                  {H_1}(t,x):=x\, P_{12}(t)-P_{11}(t),\\
                                  {H_2}(t,y):=y\, P_{22}(t)-P_{21}(t),\\
                                \end{array}$$ be considered as
 polynomials in $\K(C)[t]$. Set $M(x,y,t):=\gcd_{\K(C)[t]}({H_1},{H_2})$, then,
 $\deg_t(M(x,y,t))=1.$
 Moreover, its single root in $t$, is the inverse of $P$.
\end{teo}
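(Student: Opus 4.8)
The plan is to produce a linear factor of $M$ over $\K(C)$ and then rule out any higher $t$-degree by specializing to a generic point of the curve, exploiting that a proper parametrization is, by definition, invertible.

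\emph{Step 1: $(t-R)\mid M$.} Since $P$ is proper it has a rational inverse, i.e.\ there is $R=R(x,y)\in\K(C)$ with $R(P(s))=s$ identically. Substituting $t=R$ into $H_1(t,x)=xP_{12}(t)-P_{11}(t)$ and evaluating at the generic point $(x,y)=P(s)$ of $C$ gives $\frac{P_{11}(s)}{P_{12}(s)}\,P_{12}(s)-P_{11}(s)=0$; as this holds at the generic point it holds in $\K(C)$, that is $H_1(R,x)=0$, and in the same way $H_2(R,y)=0$. Hence $(t-R)$ divides both $H_1$ and $H_2$ in the polynomial ring $\K(C)[t]$, so $(t-R)\mid M$ and $\deg_t M\geq1$; this also singles out $R$ as the only possible root of $M$.

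\emph{Step 2: $\deg_t M\leq1$.} Since $\K(C)[t]$ is a principal ideal domain, write $H_i=M\,A_i$ with $\gcd(A_1,A_2)=1$ and, by B\'ezout, $UA_1+VA_2=1$ for some $U,V\in\K(C)[t]$. There is a dense open set $\mathcal O\subseteq C$ on which the finitely many rational functions occurring as coefficients of $M,A_1,A_2,U,V$ are all regular, on which $R$ is regular, and on which the $t$-leading coefficients of $M,A_1,A_2$ are nonzero. Fix $Q=(x_0,y_0)\in\mathcal O$ and write $h_i(t),m(t),a_i(t),\dots\in\K[t]$ for the specializations of $H_i,M,A_i,\dots$ at $Q$; then $\deg m=\deg_t M$, the identities specialize to $h_i=m\,a_i$ and $ua_1+va_2=1$, so $m=\gcd_{\K}(h_1,h_2)$ up to a nonzero constant. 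Now $t_0$ is a common root of $h_1(t)=x_0P_{12}(t)-P_{11}(t)$ and $h_2(t)=y_0P_{22}(t)-P_{21}(t)$ exactly when $P(t_0)=Q$ (the reduced form of $P$ rules out spurious roots arising from common zeros of numerator and denominator), and since $R$ is defined at $Q$ there is exactly one such $t_0$, namely $R(Q)$. Moreover, in characteristic $0$ the nonconstant rational function $P_{11}(t)/P_{12}(t)$ is separable, hence generically unramified, so for $Q$ in a dense open set $h_1$ has simple roots only; as $m\mid h_1$, the polynomial $m$ is squarefree. A squarefree polynomial with a unique root has degree $1$, so $\deg m=\deg_t M=1$.

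Combining the two steps, $M=\lambda\,(t-R)$ with $\lambda\in\K(C)^{\ast}$, so $M$ has $t$-degree $1$ and its unique root is $t=R=P^{-1}$, the inverse parametrization. The step I expect to demand the most care is the specialization in Step 2: one must justify that $\gcd$ over the function field commutes, up to units, with evaluation at a generic point, and that the two geometric inputs used — a single point in the generic fiber of $P$ (which is exactly what properness buys) and squarefreeness of the specialized $H_1$ (which is where characteristic $0$ enters) — together force the degree to collapse all the way to $1$. A slightly more algebraic variant avoids generic points by transporting everything through the field isomorphism $\K(C)\cong\K(t)$ induced by $P$: there $H_1,H_2$ become, up to units, $P_{11}(s)P_{12}(t)-P_{12}(s)P_{11}(t)$ and $P_{21}(s)P_{22}(t)-P_{22}(s)P_{21}(t)$, the diagonal $t=s$ supplies the linear factor, and injectivity of $P$ together with separability excludes any further common factor.
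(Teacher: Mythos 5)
Your argument is essentially correct, but note that the paper does not prove this statement at all: it is quoted verbatim from \cite[Thm.~4.37]{SWPD}, so the only comparison available is with the textbook's proof. That proof is the ``algebraic variant'' you sketch at the end: it works inside $\K(C)\cong\K(t)$, transports $H_1,H_2$ to $P_{11}(s)P_{12}(t)-P_{12}(s)P_{11}(t)$ and $P_{21}(s)P_{22}(t)-P_{22}(s)P_{21}(t)$, and identifies $\deg_t\gcd$ with the degree of the extension $\K(s)/\K(P(s))$, which is $1$ exactly when $P$ is proper. Your main route is genuinely different and more geometric: the linear factor $t-R$ comes from the rational inverse (Step 1 is clean, since substituting $(x,y)=P(s)$ realizes the injection $\K(C)\hookrightarrow\K(s)$ and annihilates $H_i(R,\cdot)$), and the upper bound $\deg_t M\leq 1$ comes from specialization at a generic point, where the B\'ezout identity correctly shows that the gcd commutes with specialization up to units, properness gives a single preimage of $Q$, and separability in characteristic $0$ makes the specialized $H_1$ squarefree. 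What this buys is a self-contained proof that isolates exactly where each hypothesis enters (reduced form, properness, nonconstant components, char $0$); what the field-theoretic proof buys is uniformity and no genericity bookkeeping. Two small points you gloss over are harmless but should be recorded: $H_1,H_2$ are nonzero in $\K(C)[t]$ precisely because the components of $P$ are nonconstant (so $x,y$ are nonconstant on $C$ and $xP_{12}-P_{11}$ cannot vanish coefficientwise), and $\mathcal{O}$ must also exclude the image under $P$ of the finite set of parameters where $R\circ P=\mathrm{id}$ fails, which is what upgrades ``$R$ is defined at $Q$'' to ``every $\overline{\K}$-preimage of $Q$ equals $R(Q)$.'' With those remarks the proof is complete.
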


As a consequence of the above result $M(x,y,t)$ is a linear
polynomial in $t$, so setting $M(x,y,t)=D_1(x,y)\, t-D_0(x,y)$ the
inverse of the parametrization $P$ is given by
\begin{equation}\label{E_inversa-general}
P^{-1}(t)=\frac{D_0(x,y)}{D_1(x,y)}.
\end{equation}

\end{document}